\UseRawInputEncoding
\documentclass{amsproc}
\usepackage{amssymb,amsmath,amsthm,amscd}
\newtheorem{theorem}{Theorem}
\newtheorem{lemma}{Lemma}
\newtheorem{proposition}{Proposition}

\newtheorem{corollary}{Corollary}
\newtheorem{notation}{Notation}

\begin{document}

\title[Free Prounipotent Differential Galois Group]{The Differential Galois Group of the Maximal Prounipotent Extension is Free}
\author{Andy~R. Magid}
\address{Department of Mathematics\\
        University of Oklahoma\\
        Norman OK 73019\\
 }
 \email{amagid@ou.edu}
\subjclass{12H05}

\maketitle

\begin{abstract}
Let $F$ be a characteristic zero differential field with algebraically closed constant field, and consider the compositum $F_u$ of all Picard--Vessiot extensions of $F$ with unipotent differential Galois group. We prove that the group of $F$ differential automorphisms of $F_u$ is a free prounipotent group.
\end{abstract}

\section*{Introduction}
Throughout, $F$ denotes a characteristic zero differential field with derivation $D$ and algebraically closed field of constants $C$. The compositum $F_u$ of all Picard--Vessiot extensions of $F$ with unipotent differential Galois group is a (generally infinite) differential Galois extension of $F$ whose (pro)unipotent differential Galois group we denote by $U\Pi(F)$. We show that this group is  free prounipotent.

In fact, what we will show is that $U\Pi(F)$ is projective. In \cite[Prop. 2.8, p.86]{ml} it is shown that projective prounipotent groups are free. The converse is also true, as will be shown in Section \ref{S:projfree} below.  
Recall that a proalgebraic group $P$ is \emph{projective} in the category of proalgebraic groups if for every surjective homomorphism $\alpha: A \to B$ of proalgebraic groups and for every homomorphism $f: P \to B$ of proalgebraic groups there is a homomorphism $\phi: P \to A$ of proalgebraic groups such that $f=\alpha \circ \phi$ \cite[Definition 8, p. 29]{blmm}.
(Note: the definition in \cite{blmm} said ``epimorphism" instead of "surjective". It is clear from the context that ``surjective" was meant. In the category of (pro)algebraic groups epimorphisms are not necessarily surjective, so that the definition of projective using epimorphism is far more restrictive than that using surjective.) A prounipotent group $U$ is projective in the category of prounipotent groups provided it satisfies the above definition where $A$ and $B$ are restricted to be prounipotent. By \cite{ml} (see below), to test the projectivity, and hence freeness, of a prounipotent group $U$ it suffices to consider the case of $\alpha$'s where both $A$ and $B$ are unipotent and the kernel of $\alpha$ is isomorphic to $\mathbb G_a$. We can, moreover, assume $f$ is surjective.

By the preceding, to see that the prounipotent group $U\Pi(F)$ is projective, we need to show that for any surjection $\alpha:A \to B$ of unipotent groups with kernel $K$ isomorphic to $\mathbb G_a$ and any surjective homomorphism $f:U\Pi(F) \to B$  there is a homomorphism $\phi:U\Pi(F) \to A$ such that $f=\alpha \circ \phi$. If $\alpha$ has a splitting, namely if there is a $\beta: B \to A$  such that $\alpha \circ \beta=\text{id}_B$, then we can take $\phi = \beta \circ f$. Hence we can concentrate on the case that $\alpha$ is not split. In the non-split case, if there is a $\phi$ it must be surjective. 
In other words, to see that $U\Pi(F)$ is projective we must show that for a non-split homomorphism of unipotent groups $\alpha: A \to B$ with kernel $K$ isomorphic to $\mathbb G_a$ and surjection $f:U\Pi(F) \to B$ there is a surjection $\phi :U\Pi(F) \to A$ such that $f=\alpha \circ \phi$. 
We can of course assume that $B=A/K$. By Galois theory, a surjection $U\Pi(F) \to B$ means we have a Picard--Vessiot extension $E_B$ of $F$ with differential Galois group $B$, and a surjection $U\Pi(F) \to A$ means we have a Picard--Vessiot extension $E_A$ of $F$ with differential Galois group $A$. Thus the existence of $\phi$ amounts to starting with a Picard-Vessiot extension $E_B$ of $F$ with Galois group $A=B/K$  and finding a Picard--Vessiot extension $E_A$ of $F$ with Galois group $A$ which contains $E_B$ such that $E_B=(E_A)^K$. In Galois theory this is known as the embedding problem.

Thus proving that $U\Pi(F)$ is projective amounts to a solution of the embedding problem for extensions of unipotent groups by 
$\mathbb G_a$; this is the content of our first main result, Theorem \ref{T:UPFprojective} below. As noted, this implies that $U\Pi(F)$ is free prounipotent. This is refined in our second main result, Theorem \ref{generators}, were it is shown that $U\Pi(F)$ is free prounipotent on a set of cardinality equal to the $C$ vector space dimension of $F/D(F)$.

The group $\Pi(F)$ of $F$ differential automorphisms of the compositum of all Picard--Vessiot extensions of $F$ is a proalgebraic group whose maximal prounipotent  quotient is $U\Pi(F)$. If $\Pi(F)$ is projective (a very strong property: this implies all embedding problems over $F$ are solvable) then so is $U\Pi(F)$. Bachmayr, Harbater, Hartmann, and Wibmer \cite{bhhw} have shown that $\Pi(F)$ is free, and hence projective, in some cases.

A preliminary version of this work was originally presented at the conference ``Galois Groups and Brauer Groups" held in honor of Jack Sonn.

\section{Embedding Problem} \label{S;embeggings}

We retain the notation from the introduction: $F$ denotes a characteristic zero differential field with algebraically closed field of constants $C$. Its derivation is denoted $D_F$, with the subscript sometimes omitted. 

As noted in the introduction, to prove that $U\Pi(F)$ is projective we need to solve an embedding problem which starts with a Picard--Vessiot extension $E$ of $F$ with unipotent differential Galois group $B$ and a non-split unipotent extension $A$ of $B$ by $\mathbb G_a$. In this context the Picard--Vessiot ring of $E$ is shown in Proposition \ref{nounipotentforms} to be isomorphic to $F[B]$ (hence a polynomial ring over $F$ and \emph{a fortiori} a UFD) and the surjection $A \to B$ is split as varieties. We are going to show in Theorem \ref{embeddingthm} that this embedding problem has a solution when the hypotheses are weakened to only require that $B$ is a proalgebraic group such that $F[B]$ is a UFD with a $B$ invariant derivation extending $D_F$ such that the quotient field $F(B)$ has no constants except $C$. This makes $F(B)$ a possibly infinite Picard--Vessiot extension of $F$ with differential Galois group $B$. We further require that the differential ring has no non-trivial principal differential ideals. Then we solve the embedding problem when $A \to B$ is a non-split extension of proalgebraic groups  with kernel isomorphic to $\mathbb G_a$ which is split as a surjection of provarieties.

We  fix the following notation for the group $\mathbb G_a$:

\begin{notation} \label{Ga}
\[
\mathbb G_a=\{ z^a | a \in C\} \text{ with } z^az^b=z^{a+b}
\]
\[
C[\mathbb G_a]=C[y] \text{ with } y(z^a)=a
\]

The action of $\mathbb G_a$ on $C[\mathbb G_a]$ (left action on functions from right translation action on the group) is then given by

\[
z^b\cdot y(z^a) = y(z^az^b)=a+b=y(z^a)+b
\]
\[
z^b \cdot y = y+b
\]

\end{notation}

We also introduce some notational conventions for extensions by $\mathbb G_a$ which are split as varieties:

\begin{notation} \label{extensionofGa} 

Let
\[
1 \to \mathbb G_a \to G \to \overline{G} \to 1
\]
be a central extension of (pro)algebraic groups over $C$ which
splits as varieties. 
 
Denote  the map $G \to \overline{G}$ by  $g \mapsto \overline{g}$.

Denote the variety section  $ \overline{G} \to G$ by  $\psi$ so that $\overline{\psi({\overline{g})}}=\overline{g}$.

Then $\phi(g):=\psi(\overline{g})$ can be regarded as a function on $ G$

Taking $\mathbb G_a$ to be a subgroup of $G$ and using the conventions of Notation \ref{Ga}, we define the function $y \in C[G]\subset F[G]$ by
\[
g=\phi(g)z^{y(g)}.
\]
We call this the $y-\phi$ representation of elements of $G$. Then
\[
C[G]=C[\overline{G}][y];  \text{ and   } C[\overline{G}]=C[G]^{\mathbb G_a}.
\]
\end{notation}

With these conventions, we have the following solution of some Embedding Problems for central extensions of $\mathbb G_a$ which are split as varieties but not as groups. The result has a statement about factorality and units in both its hypotheses and conclusions; this is to enable the result to be used inductively. Note that, in the notation of the statement of Theorem  \ref{embeddingthm}, $F(G) \supset F$ and $F(\overline{G}) \supset F$ have no new constants, so they are Picard--Vessiot extensions with groups $G$ and $\overline{G}$, respectively, and $F(\overline{G}) \subset F(G)$, solving the associated embedding problem.

\begin{theorem} \label{embeddingthm} Let $\overline{G}$ be a (pro)algebraic group over $C$. Assume that $F[\overline{G}]$ is a unique factorization domain. Assume further that there is a derivation $D$ of $F[\overline{G}]$ extending $D_F$ such that
\begin{enumerate}
\item $F(\overline{G})$ has no new constants \\
\item If $0 \neq q \in F[\overline{G}]$ and $q|D(q)$ then $q$ is a unit\\
\end{enumerate}
Let
\[
1 \to \mathbb G_a \to G \to \overline{G} \to 1
\]
be a central extension of $C$ groups which
splits as varieties but not as algebraic groups over $F$. Then there is a derivation on $F[G]$ extending $D$ which commutes with the $G$ action and is such that $F(G)$ has no new constants. Moreover, $F[G]$ is a unique factorization domain and if $0 \neq q \in F[G]$ and $q|D(q)$ then $q$ is a unit.
\end{theorem}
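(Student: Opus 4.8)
The plan is to build everything on the variety splitting, which by Notation~\ref{extensionofGa} identifies $F[G]$ with the polynomial ring $F[\overline{G}][y]$, where $\mathbb{G}_a$ acts by $y\mapsto y+b$ and $F[\overline{G}]=F[G]^{\mathbb{G}_a}$. Two of the four conclusions are then nearly immediate: since $F[\overline{G}]$ is a domain, $F[\overline{G}][y]$ is a polynomial ring over the unique factorization domain $F[\overline{G}]$ and hence itself a unique factorization domain, and its units are exactly the units of $F[\overline{G}]$. So the factoriality assertion needs no further work, and only the derivation, the absence of new constants, and the divisibility statement $q\mid D(q)\Rightarrow q$ a unit remain. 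I will construct the derivation by extending $D$ to $\widetilde{D}$ on $F[\overline{G}][y]$, which amounts to choosing the single value $\widetilde{D}(y)=:\theta$, and I will look for $\theta$ in $F[\overline{G}]$.

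Next I would pin down $\theta$ from the requirement that $\widetilde{D}$ commute with the $G$-action. Writing elements in the $y$--$\phi$ representation of Notation~\ref{extensionofGa} and using that $\mathbb{G}_a$ is central (Notation~\ref{Ga}), a direct computation gives $g\cdot y=y+\gamma_g$ with $\gamma_g\in F[\overline{G}]$ built from the defining $2$-cocycle $c$, explicitly $\gamma_g(\overline h)=y(g)+c(\overline h,\overline g)$, and the map $g\mapsto\gamma_g$ is a $1$-cocycle of $\overline{G}$ in the regular representation $F[\overline{G}]$. Commuting $\widetilde{D}$ with the action on the generator $y$ is then exactly the identity $g\cdot\theta-\theta=D(\gamma_g)$; since $D$ commutes with the $\overline{G}$-action, $g\mapsto D(\gamma_g)$ is again a $1$-cocycle, and because the regular representation is cohomologically trivial (that is, $H^1(\overline{G},F[\overline{G}])=0$, which in the pro case one reduces to the algebraic quotients through which $c$ factors) this cocycle is a coboundary. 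Any $\theta\in F[\overline{G}]$ realizing it yields a derivation commuting with the whole $G$-action, since it suffices to check on the generators $F[\overline{G}]$ and $y$; note $\theta$ is determined only modulo $F$.

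The heart of the argument, and the step I expect to be hardest, is the absence of new constants. Since $y$ is transcendental over $F(\overline{G})$ with $\widetilde{D}(y)=\theta\in F(\overline{G})$, a standard computation on $F(\overline{G})[y]$ shows that $F(G)=F(\overline{G})(y)$ has no new constants precisely when $\theta\notin D\bigl(F(\overline{G})\bigr)$, so I must rule out $\theta=D(u)$ with $u\in F(\overline{G})$. Here both remaining hypotheses enter. First, writing $u=p/q$ in lowest terms and clearing denominators in $\theta=D(u)\in F[\overline{G}]$ forces $q\mid D(q)$, so by hypothesis~(2) $q$ is a unit and in fact $u\in F[\overline{G}]$. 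Then I substitute $\theta=D(u)$ into $g\cdot\theta-\theta=D(\gamma_g)$ to get $D(\overline g\cdot u-u-\gamma_g)=0$; by hypothesis~(1) this difference lies in $C$, so reading off values gives $c(\overline h,\overline g)=u(\overline h\,\overline g)-u(\overline h)-\kappa(\overline g)$ for some $\kappa\colon\overline{G}\to C$. Feeding this back into the $2$-cocycle identity for $c$ forces $u-\kappa$ to be constant, whence $c$ becomes the coboundary of the regular function $u\in F[\overline{G}]$ up to a constant (itself a coboundary). That exhibits a splitting of $1\to\mathbb{G}_a\to G\to\overline{G}\to 1$ as algebraic groups over $F$, contradicting the hypothesis; hence $\theta\notin D\bigl(F(\overline{G})\bigr)$ and $F(G)$ has no new constants. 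The delicate point is keeping track of whether the putative primitive $u$ is merely rational or genuinely regular, which is exactly what hypothesis~(2) secures.

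Finally I would re-establish the divisibility statement for $F[G]$, which feeds the induction. Suppose $0\neq q\in F[\overline{G}][y]$ has $y$-degree $n$ with leading coefficient $q_n$ and $q\mid D(q)$, say $D(q)=hq$. Because $\widetilde{D}$ cannot raise the $y$-degree (as $\widetilde{D}(y)=\theta\in F[\overline{G}]$), comparing degrees forces $h\in F[\overline{G}]$, and the top coefficient gives $D(q_n)=h\,q_n$, so $q_n\mid D(q_n)$ and $q_n$ is a unit by hypothesis~(2). If $n\geq 1$, the coefficient of $y^{n-1}$ yields $D(q_{n-1})+n\,q_n\theta=h\,q_{n-1}$; solving for $\theta$ and using $h=D(q_n)q_n^{-1}$ rewrites $\theta$ as $D\bigl(-\tfrac1n q_n^{-1}q_{n-1}\bigr)\in D\bigl(F(\overline{G})\bigr)$, contradicting the absence of new constants just proved. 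Hence $n=0$, $q\in F[\overline{G}]$, and $q$ is a unit by hypothesis~(2). This closes all four assertions.
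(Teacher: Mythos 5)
Your proposal is correct and follows essentially the same route as the paper: the identification $F[G]=F[\overline{G}][y]$, the $1$-cocycle $g\mapsto D(\alpha(\cdot,g))$ killed by the vanishing of $H^1(\overline{G},F[\overline{G}])$, the reduction of "no new constants" to $\theta\notin D(F(\overline{G}))$ with hypothesis (2) forcing a putative primitive to be regular and then splitting the extension, and the leading-coefficient argument for the divisibility claim. The only difference is cosmetic, in the last step of the divisibility argument, where you exhibit $\theta$ as a derivative from the $y^{n-1}$ coefficient while the paper normalizes the leading coefficient and uses the degree drop to force $D(q)=0$.
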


\begin{proof} Using the  conventions of Notations \ref{Ga} and \ref{extensionofGa}, and the fact that $\mathbb G_a$ is central,we compare the product of $y-\phi$ representations with the $y-\phi$ representation of the product:
\begin{align}\notag
gh&=\phi(gh)z^{y(gh)} \notag\\
gh&=\phi(g)z^{y(g)}\phi(h)z^{y(h)}\notag\\
&=\phi(g)\phi(h)z^{y(g)+y(h)}\notag\\ \notag
\end{align}
Then we combine to define the function $\alpha$ on $G \times G$, which, since $\phi(g)=\psi(\overline{g})$, can also be viewed as a function on  $\overline{G} \times \overline{G}$:
\[
z^{y(gh)-y(g)-y(h)}=\phi(gh)^{-1}\phi(g)\phi(h)=:z^{\alpha(g,h)} 
\]
We can then use $\alpha$ to describe the action of $G$ on $y$:
\begin{align}\notag
y(gh)&=y(g)+y(h)+\alpha(g,h)\notag\\
h \cdot y&= y+y(h)+\alpha(\cdot,h)\notag\\ \notag
\end{align}

The second of these equations implies that $\alpha(\cdot,h) = h \cdot y - y -y(h)$ is an element of $C[\overline{G}] \subseteq F[\overline{G}]$. Thus $\alpha(\cdot, h)$ is a $C$ valued function on $\overline{G}$.
As noted above in Notation \ref{extensionofGa}, $F[G]=F[\overline{G}][y]$. Since $D$ needs to extend the derivation on $F[\overline{G}]$, we need only define it on $y$. 
Thus to define $D$ it will suffice to set $D(y)=f \in F[\overline{G}]$ for some appropriate element $f$. For $D$ to be $G$ equivariant, we want $D(h\cdot y)=h\cdot D(y)$, which by the above means $f+D(\alpha(\cdot,h))=h\cdot f$. Note that this is a condition on $f$.

Thinking of $\alpha$ as a $C$ valued function on $\overline{G} \times \overline{G}$, we define:
\[
\sigma(h):=D(\alpha(\cdot,h)) \text{ (a function on } \overline{G}).
\]
We are going to show that $\sigma$ is a cocycle. We calculate: $\sigma(hk)$, using $x$ to stand for the variable argument also symbolized by $(\cdot)$:
\begin{align}
\sigma(hk)(x)&=D(\alpha(x,hk))=D(y(x(hk))-y(x)-y(hk))=D(y(x(hk))-y(x))-D(y(hk))\notag\\
&=D(y(x(hk))-y(x)) \text{ ($y$ is $C$ valued so $D(y(hk))=0$); and }\notag\\
\sigma(h)(x)+h\cdot\sigma(k)(x)&=D(\alpha(x,h))+D(\alpha(xh,k))=D(\alpha(x,h)+\alpha(xh,k))\notag\\
 &=D( y(xh)-y(x)-y(h)+y((xh)k)-y(xh)-y(k))\notag\\
 &=D(y((xh)k)-y(x))-D(y(h)+y(k))\notag\\
 &=D(y(xh)k)-y(x))\notag\\ \notag
 \end{align}
 Thus
 \[
\sigma(hk)=\sigma(h)+h\cdot \sigma(k)
\]
so 
\[
\sigma \in Z^1(\overline{G},F[\overline{G}])
\]

In \cite[Proposition 2.2, p. 495]{h}, it is shown that (1) $C[\overline{G}]$ (and therefore $F[\overline{G}]$) is an injective $\overline{G}$ module; and (2) for any $\overline{G}$ module $M$, $\text{Ext}^1_{\overline{G}}(C,M)=Z^1(\overline{G},M)/B^1(\overline{G},M)$. For $M=F[\overline{G}]$, then, every cocycle is a coboundary.
It follows that $\sigma =\delta(f)$ for some $f \in F[\overline{G}]$. (Note: the results of \cite{h} are for linear algebraic groups; the extensions to proalgebraic groups are straightforward.)

We use the $f$ such that $\sigma =\delta(f)$ (so that $\sigma(h)=h\cdot f -f$) in the definition of $D$.
Since by definition $\sigma(h)=D(\alpha(\cdot,h))$, we have $D(\alpha(\cdot,h))=h\cdot f -f$, or $f+D(\alpha(\cdot,h))=h\cdot f$. This is precisely the condition obtained above for the $G$ invariance of $D$.

Thus $D$ extends the derivation of $F[\overline{G}]$.

Next, we want to show that $F(G)$ has no new constants. We can regard $F(G)$ as the quotient field of $F(\overline{G})[y]$. Since $F(\overline{G})$ has no new constants, we claim that $F(G)$ has no new constants provided that $f$ is not a derivative in $F(\overline{G})$. This is an elementary direct calculation; for example, see \cite[Remark 1.10.2 p.7]{m}. 

It remains to show that $f$ is a not a derivative. Suppose it is. Since $F[\overline{G}]$ is a UFD, if $f$ is the derivative of an element of the quotient field of $F[\overline{G}]$, $f=D(p/q)$ where $p$ and $q$ are relatively prime elements of $F[\overline{G}]$. Then
\[
fq^2=qD(p)-pD(q)
\]
which implies that $q|D(q)$. By assumption, this means that $q$ is a unit of $F[\overline{G}]$ and that $f$ is the derivative of $p/q \in F[\overline{G}]$.

Let $f_0$ denote $p/q$. Replace $f_0$ by $f_0-f_0(e)$ so that $f_0(e)=0$.

Then $D(\alpha(\cdot,h)) =h\cdot D(f_0) -D(f_0)= D(h\cdot f_0 -f_0)$ which means
$\alpha(\cdot,h)=h\cdot f_0 -f_0 -c_h$ for some $c_h \in C$.

Since $\alpha(g,h)=f_0(gh)-f_0(g)-c_h$ and  $0=\alpha(e,h)=f_0(h)-f_0(e)-c_h$, $c_h=f_0(h)$.
Thus $y(gh)=y(g)+y(h)+\alpha(g,h)=y(g)+y(h)+f_0(gh)-f_0(g)-f_0(h)$. 

Since $y \in C[G] \subseteq F[G]$ and $f_0 \in F[\overline{G}] \subseteq F[G]$, their difference $y-f_0$ lies in $F[G]$.

Let $x=y-f_0$. Then $x(gh)=y(gh)-f_0(gh)=y(g)+y(h)+f_0(gh)-f_0(g)-f_0(h)-f_0(gh)=x(g)+x(h)$.
which implies that $x$ is a homomorphism.

Since $x$ is a homomorphism and $F[G]=F[\overline{G}][y]=F[\overline{G}][x]$,
\[
1 \to (\mathbb G_a)_{ F} \to G_{ F} \to (\overline{G})_{F} \to 1 \text{ splits as a group extension. }
\]
Since $C$ is algebraically closed, this means that the extension already splits as groups over $C$.

This contradiction means $f$ is not a derivative in the quotient field of $F[\overline{G}]$. We conclude that $F(G)$ has no new constants.

We also observe that $F[G]=F[\overline{G}][y]$ is a unique factorization domain. To complete the proof, suppose $0 \neq q \in F[G]$ and $q|D(q)$. We can write $q$ as a polynomial in $y$ with coefficients in $F[\overline{G}]$, say $q=\sum_{k=0}^n a_ky^k$ with $a_n \neq 0$. Then $D(q)=\sum (D(a_k)y^k+ka_ky^{k-1})f$ has degree at most $n$. Thus $D(q)=bq$ for some $b$ in $F[\overline{G}]$. In particular, $D(a_n)=ba_n$. Since $a_n|D(a_n)$, this means $a_n$ is a unit. We differentiate $q/a_n$:
\[
D(\frac{q}{a_n})=\frac{a_nD(q)-qD(a_n)}{a_n^2})=\frac{q}{a_n}\frac{a_nb-D(a_n)}{a_n} \text{  so}
\]
\[
\frac{q}{a_n}|D(\frac{q}{a_n}).
\]

So we can replace $q$ by $q/a_n$ and hence we can assume $a_n=1$. Then $D(q)$ has degree less than $n$, so $q|D(q)$ implies that $D(q)=0$. Thus $q$ is a constant of $F(G)$, and we know these are in $C$. In particular, $q$ is a unit of $F[G]$.

This completes the proof of the theorem.
\end{proof}

\section{Projective Galois Groups} \label{S:projgroups}

We are going to apply Theorem \ref{embeddingthm} when the group $\overline{G}$ is (pro)unipotent, and hence so is $G$. This implies  that  $F[G]$ is a polynomial ring, and in particular a unique factorization domain, all of whose units are in $F$.

With that application in mind, we make an observation about (infinite) Picard--Vessiot extensions whose differential Galois group is (pro)unipotent.

\begin{proposition} \label{nounipotentforms} Let $E \supset F$ be a (possibly infinite) Picard--Vessiot extension with (pro)unipotent differential Galois group $H$. Then its Picard--Vessiot ring $R$ is isomorphic to $F[H]$ as a ring and an $H$ module.
\end{proposition}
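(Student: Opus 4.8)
The plan is to identify $R$ with a trivial $H$-torsor and then trivialize it using the unipotence of $H$. First I would recall from the general structure theory of Picard--Vessiot extensions that $X=\mathrm{Spec}(R)$ is a torsor over $F$ under the group $H_F=H\times_C F$; concretely, there is an $H$-equivariant isomorphism
\[
R\otimes_F R \;\cong\; R\otimes_C C[H].
\]
Producing the desired isomorphism $R\cong F[H]$ of $F$-algebras and $H$-modules is then exactly the same as trivializing this torsor, that is, exhibiting an $F$-rational point (a section): a trivial $H$-torsor over $F$ is $H$-equivariantly $F[H]$. So the entire content of the proposition is the triviality of the torsor.

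I would deduce this triviality from the unipotence of $H$. Torsors over the field $F$ under the smooth (characteristic zero) group $H_F$ are classified by $H^1(F,H_F)$, and I would show this is trivial when $H$ is unipotent. In the finite-dimensional case this follows by d\'evissage along a central series: for a central extension $1\to\mathbb{G}_a\to U\to\overline U\to 1$ the vanishing $H^1(F,\mathbb{G}_a)=0$ (additive Hilbert 90, i.e.\ the higher coherent cohomology of the point $\mathrm{Spec}\,F$) together with the exactness of the pointed cohomology sequence forces $H^1(F,U)=0$ once $H^1(F,\overline U)=0$. Equivalently, $\mathbb{G}_a$ is a special group in Serre's sense and specialness is inherited by such extensions, so every $U$-torsor over the field $F$ is Zariski-locally trivial, hence trivial. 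This reproves, in exactly the form needed, that a Picard--Vessiot ring with unipotent group is a trivial torsor.

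For the pro-unipotent case I would pass to the limit. Write $H=\varprojlim_i H_i$ with the $H_i$ finite-dimensional unipotent quotients; the corresponding sub-extensions $E_i\subset E$ have Picard--Vessiot rings $R_i$ with $R=\varinjlim_i R_i$ a directed union, and each $R_i\cong F[H_i]$ by the finite-dimensional case. The task is to choose these isomorphisms compatibly, i.e.\ to choose a compatible system of $F$-points in the tower of torsors $\cdots\to X_j\to X_i\to\cdots$. Each transition map $X_j\to X_i$ is itself a torsor under the unipotent kernel $\ker(H_j\to H_i)$, so an $F$-point of $X_i$ lifts to one of $X_j$ by the finite-dimensional vanishing just established. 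Hence the inverse system of (nonempty) section sets has surjective transition maps, there is no $\varprojlim^1$ obstruction, and a compatible section exists; taking the limit yields $R\cong\varinjlim_i F[H_i]=F[H]$ as an $F$-algebra and $H$-module.

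The main obstacle I anticipate is not the finite-dimensional statement---there the only real input is the elementary $H^1(F,\mathbb{G}_a)=0$---but making the trivializations compatible in the pro-unipotent limit. The decisive point is that every transition in the tower is again a unipotent torsor, so sections lift surjectively and no higher-derived-limit obstruction survives; this is where the hypothesis that $H$ is \emph{pro}unipotent, rather than an arbitrary inverse limit of groups, is essential.
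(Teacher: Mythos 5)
Your finite-dimensional argument is sound and is essentially the paper's: the Picard--Vessiot ring is a form/torsor, forms are classified by $H^1(F,H)$, and this vanishes for unipotent $H$ by d\'evissage along a central series using $H^1(F,\mathbb G_a)=0$ (this is exactly the content of the Platonov--Rapinchuk lemma the paper cites). The reduction of the prounipotent case to choosing a compatible system of $F$-points in the tower $\cdots\to X_j\to X_i\to\cdots$, and the observation that each transition is a torsor under the unipotent kernel so that $X_j(F)\to X_i(F)$ is surjective, are also correct.

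The gap is the final step: ``the inverse system of nonempty section sets has surjective transition maps, hence no $\varprojlim^1$ obstruction, hence a compatible section exists.'' This is false for a general directed index set. An inverse system of nonempty sets with surjective transition maps has nonempty limit when the index set is countable (or cofinally countable), but there are standard counterexamples over uncountable directed posets, and nothing in the hypotheses bounds the cardinality of the system of unipotent quotients of $H$ (indeed in the paper's application $H$ can be $U\Pi(F)$ itself). So surjectivity alone does not finish the argument; you need a compactness input. This is precisely why the paper introduces the \emph{coset topology} on each fiber --- each set of trivializations is a principal homogeneous space under $H_i(\overline F)$ (resp.\ $H_i(F)$), topologized so that closed sets are cosets of closed subgroups; the transition maps are then continuous and \emph{closed}, and the Hochschild--Mostow result \cite[Prop.~2.7, p.~504]{hm} on inverse limits of such spaces guarantees the limit is nonempty. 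The paper runs this argument twice, once to produce an $\overline F$-point of $R$ (Lemma \ref{Fbarpoint}) and once to show $H^1(F,H)$ is trivial for prounipotent $H$ (Lemma \ref{trivialHone}). Your proof becomes complete if you replace the $\varprojlim^1$ assertion with such a compactness argument (or restrict to $H$ with a countable cofinal system of quotients).
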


Before proving the proposition we make some preliminary remarks. Proposition \ref{nounipotentforms} is a prounipotent version of Kolchin's Theorem,  \cite[Theorem 5.12 p. 67]{m}, which says that if the affine algebraic group $G$ is the differential Galois group of the Picard--Vessiot extension $K \supseteq F$ with Picard--Vessiot ring $T=T(K/F)$ then there is a $G$ equivariant $\overline{F}$ algebra isomorphism
\[
\overline{F} \otimes_F T \simeq \overline{F} \otimes_F F[G]=\overline{F}[G].
\]
The isomorphism is given explicitly as $h=(f \otimes 1)\circ \Delta$, where $\Delta: T \to T \otimes_C C[G]$ is action of $G$ on $T$, and $f:T \to \overline{F}$ is any $F$ algebra homomorphism. (Note that the facts that $f$ exists, and that the proof that $h$ is an isomorphism, use that $T$ is an affine $F$ algebra.) Suppose further that there is an $F$ algebra homomorphism $f_0: T \to F$. Then we can consider the map $h_0=(f_0 \otimes 1)\circ \Delta: T \to F[G]$. Since $h=\overline{F} \otimes  h_0$ is an isomorphism, $h_0$ is also.

Now suppose $K \supseteq F$ is an infinite Picard--Vessiot extension,  $G$ its proaffine differential Galois group, and $T$ its Picard--Vessiot ring. Suppose further that there is an $F$ algebra homomorphism $T \to \overline{F}$. Then we can define the $\overline{F}$ algebra $G$ equivariant homomorphism $h=(f \otimes 1) \Delta$ as above. For any closed normal subgroup $H$ of $G$ the restriction of $h$ to $\overline{F} \otimes_F T(G)^H=\overline{F} \otimes_F T(G/H)$, which maps to 
$\overline{F}[G]^H=\overline{F}[G/H]$, is an isomorphism by Kolchin's Theorem applied to the Picard--Vessiot extension $K^H \supseteq F$ with affine differential Galois group $G/H$. Since $T$ is the union of the $T^H$ as $H$ ranges over all closed normal subgroups of $G$, and the restrictions of $h$ to  the $T^H$'s are isomorphisms, we conclude that Kolchin's Theorem holds in the infinite Picard--Vessiot case \emph{provided there is a map} $f:T \to \overline{F}$. And just as in the affine case
(with the same proof), 
if there is an $F$ algebra homomorphism $T \to F$ then there is a $G$ equivariant $F$ algebra isomorphism $T \to F[G]$. Such an isomorphism is the conclusion of Proposition \ref{nounipotentforms}. Thus to prove Proposition \ref{nounipotentforms}, it suffices to show the existence an $F$ algebra homomorphism $T \to F$. Conversely, if the proposition holds, there is such a homomorphism, for example given by  $F[H] \to F$ by evaluation at an element of $H$. We proceed to the proof, reverting to the notation of the proposition.

\begin{proof} (of Proposition \ref{nounipotentforms}). When $H$ is unipotent, Kolchin's Theorem says that $R$ is the (coordinate ring of) an affine $F$ variety which is an $\overline{F}/F$ form of $F[H]$. These are classified by the non-commutative Galois cohomology set $H^1(F, H)$ \cite[Theorem 2.9, p.67]{platrap} and this latter set is trivial when $H$ is unipotent \cite[Lemma 2.7, p. 74]{platrap}. Thus the result holds in the unipotent case. For the prounipotent case, we begin by using Zorn's Lemma to find a closed normal subgroup $N \leq H$ minimal with respect to the property that there is an $F$ algebra homomorphism $f:T^N \to F$. If $N$ is trivial, the proposition follows. If $N$ is not trivial, then there is a proper closed normal subgroup $M < H$ with $H/M$ affine and $T^M$ not contained in $T^N$. By the unipotent case we have $T^M=T(E^M/F) \simeq F[H/M]$. The projections $H \to H/N$ and $H \to H/M$ induce an isomorphism $H/(N \cap M) \to (H/N)\times_{H/NM} (H/M)$ and hence an $H$ algebra isomorphism $\beta: F[H/(N \cap M)] \to F[H/N] \otimes_{F[H/NM]} F[H/M]$. Since the source of $\beta$ is a simple $H$ algebra, so is the target. The unipotent case further implies that $T^{NM} \simeq F[H/NM]$. The splitting of $H/M \to H/NM$ as a map of varieties, and the fact that unipotent groups are isomorphic to affine spaces as varieties, means that $F[H/M]$ is a polynomial ring over $F[H/NM]$ from which it follows that $T^M$ is a polynomial ring over $T^{NM}$
We also have $T^{NM} \subset T^N$. Let $g_0$ denote the restriction of $f$ to $T^{NM}$; since $T^M$ is a polynomial ring over $T^{NM}$ there is an $F$ algebra homomorphism $g:T^M \to F$ which agrees with $g_0$ on $T^{NM}$. Then 
$g \otimes_F f$ defines an $F$ algebra homomorphism $p: T^M \otimes_{T^{NM}} T^N \to F$. We also know that $T^M \otimes_{T^{NM}} T^N \simeq   F[H/N] \otimes_{F[H/NM]} F[H/M]$ and that the target is $H$ simple, and hence so is the source. This tells us that the $H$ equivariant $F$ algebra homomorphism $\gamma: T^M \otimes_{T^{NM}} T^N  \to T$ is injective; let $T_0$ denote its image. Note that $f \circ \gamma^{-1}: T_0 \to F$ is an $F$ algebra homomorphism. We have an $H$ equivariant $F$ algebra isomorphism $\gamma \circ \beta : F[H/(N \cap M)] \to T_0$. We claim  that $T_0=T^{N\cap M}$. If so, we have a contradiction to the minimality of $N$: $N \cap M$ is a proper subgroup of $N$ and there is an $F$ algebra homomorphism $T^{N \cap M} \to T$. 
Consider a $t \in T$ fixed by $N \cap M$. Choose a 
 closed normal subgroup $Q \leq H$ with $(N \cap M) \leq Q$,  $H/Q$ unipotent, and $t \in T^Q$. 
By the unipotent case $T^Q$ is $H$ equivariantly isomorphic to $F[H/Q]$ as an $F$ algebra. But $F[H/Q]=F[H/(N \cap M)]^Q$ is also $H$ equivariantly isomorphic to $T_0^Q$ as an $F$ algebra. Thus the inclusion $T_0^Q \to T^Q$ is an $H$ equivariant $F$ algebra endomorphism of $F[H/Q]$, and any such is in fact an algebra isomorphism. In particular, $t$ is in its image and hence in $T_0$. 
We conclude that the claim holds, and hence the proposition is proven. 
\end{proof}

Proposition \ref{nounipotentforms} implies that if  $E \supset F$ is a (possibly infinite) Picard--Vessiot extension with (pro)unipotent differential Galois group $\overline{G}$, then its Picard--Vessiot ring $F[\overline{G}]=F\otimes C[\overline{G}]$ satisfies the hypotheses of Theorem \ref{embeddingthm}. To apply the theorem to an extension of $\overline{G}$ by $\mathbb G_a$, we need to know that the extension in question is split as varieties (and not split as groups). All extensions of (pro)unipotents by (pro)unipotents are split as varieties: this fact seems to be well known, so we only sketch the proof.

\begin{proposition} \label{unipotentsplit} Let
\[
1 \to K \to G \to H \to 1
\]
be an extension of a prounipotent group $H$ by a prounipotent group $K$. Then the extension splits as varieties.
\end{proposition}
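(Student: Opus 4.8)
The plan is to prove Proposition \ref{unipotentsplit} by reducing to the finite-dimensional case and then building a variety section by induction along a central series, using the fact that unipotent groups over a field of characteristic zero are successive extensions by copies of $\mathbb G_a$. First I would recall the standard structure theory: a unipotent group $H$ over the characteristic-zero field $C$ admits a central series whose successive quotients are vector groups, and dually $C[H]$ is a polynomial ring on which $H$ acts; as a variety, a unipotent group is isomorphic to affine space. The goal is a morphism of varieties $s: H \to G$ with $\pi \circ s = \mathrm{id}_H$ (not necessarily a group homomorphism), where $\pi: G \to H$ is the given projection.

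The key reduction is to the case where the kernel $K$ is a single copy of $\mathbb G_a$. If I can split (as varieties) every extension by $\mathbb G_a$, then I can handle a general prounipotent $K$ by writing $K$ as an inverse limit whose transition maps have $\mathbb G_a$ kernels, splitting one layer at a time, and assembling the sections compatibly in the inverse limit (using that each layer is an affine-space bundle, which carries a section). So the heart of the matter is: given a central extension $1 \to \mathbb G_a \to G \to H \to 1$ of unipotent groups over $C$, produce a variety section. Here one observes that $G \to H$ realizes $G$ as a principal $\mathbb G_a$-bundle over $H$; equivalently, as the total space of a torsor under the vector group $\mathbb G_a$. Over an affine base such torsors are classified by $H^1(H, \mathcal O_H) = H^1_{\mathrm{Zar}}(H, \mathbb G_a)$, and since $H$ is isomorphic as a variety to affine space $\mathbb A^n$, this cohomology group vanishes; hence the bundle is trivial and a section exists. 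Concretely, one can exhibit the section by noting $C[G]$ is a polynomial ring over $C[H]$ in one variable $y$ (the $\mathbb G_a$-coordinate), so the inclusion $C[H] \hookrightarrow C[G]$ has a $C[H]$-algebra retraction sending $y$ to $0$, and this retraction is exactly a variety section.

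For a general (possibly non-central) $\mathbb G_a$-kernel the extension need not be central, but since $\mathbb G_a$ is abelian and the conjugation action of $H$ on $\mathbb G_a \cong \mathbb A^1$ is by algebraic group automorphisms of $\mathbb G_a$ — which over a field of characteristic zero are just scalar multiplications — the same bundle-triviality argument applies: $G$ is still a torsor over $H$ under a line bundle twisted by a character, and triviality of vector bundles on affine space yields the section. I would then lift the central-series induction to $H$ itself: filter $H$ by a central series with $\mathbb G_a$-quotients, pull the filtration back to $G$, and build the section stratum by stratum, at each stage extending a section over a quotient to a section over the next larger group by choosing a splitting of the intermediate $\mathbb G_a$-extension as above.

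The main obstacle I expect is the bookkeeping in the inverse-limit / prounipotent step rather than any single finite-dimensional splitting: one must choose the layer-by-layer sections \emph{compatibly} so that they glue to a section of the full pro-object, and verify that the relevant inverse system of nonempty section sets (each an affine space, hence with surjective continuous transition maps) has nonempty limit. This is the same flavor of projective-limit argument used in Lemma \ref{trivialHone}, and I would invoke a Mittag-Leffler or \cite[Prop. 2.7, p. 504]{hm}-style nonemptiness result to conclude. The finite-dimensional splitting itself is essentially forced by the vanishing of coherent cohomology on affine space, so the real care is in the functoriality and limit compatibility.
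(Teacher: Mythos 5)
Your proposal is correct in substance but takes a genuinely different route from the paper. The paper's proof is a one-step argument: in characteristic zero a (pro)unipotent group is isomorphic \emph{as a (pro)variety} to its (complete) Lie algebra via the exponential map, the induced map $\mathrm{Lie}(G) \to \mathrm{Lie}(H)$ is a surjection of (pro-)vector spaces with closed kernel and hence admits a linear section, and conjugating that linear section by the two exponential isomorphisms gives the variety section of $G \to H$ all at once --- no dévissage of $K$, no induction, and the prounipotent case is absorbed by the same linear-algebra statement for products of copies of $\mathbb G_a$. Your route instead realizes $G \to H$ as an iterated torsor under vector groups and kills the obstruction by the vanishing of $H^1(H,\mathcal O_H)$ on the affine base; this is sound (indeed you only need $H$ affine, not $H \cong \mathbb A^n$), but it forces you to choose a filtration of $K$ by subgroups normal in $G$ (e.g.\ the lower central series of $K$, whose terms are characteristic in $K$) and then to run the Mittag--Leffler-type limit argument you describe, which is exactly the bookkeeping the paper's Lie-algebra argument avoids. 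Two smaller remarks: your ``concretely'' sentence is mildly circular, since knowing $C[G]=C[H][y]$ already presupposes the triviality of the torsor that the cohomology vanishing is meant to establish; and your discussion of the non-central case is unnecessary here, because (as the paper notes immediately after the proposition) a normal subgroup isomorphic to $\mathbb G_a$ in a prounipotent group is automatically central, the conjugation action on a one-dimensional representation being trivial.
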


\begin{proof} When $H$ and $K$ are unipotent, so is $G$. Both $G$ and $H$ are isomorphic as varieties to their Lie algebras. A right linear inverse to the linear vector space projection $\text{Lie}(G) \to \text{Lie}(H)$ composed with these isomorphisms is a variety section. The same argument works for prounipotent groups, using the complete Lie algebras \cite[1.1 p.78]{ml}. There are (pro)variety isomorphisms of the groups to the complete Lie algebras, which are as additive groups products of copies of $\mathbb G_a$, and the surjection  between them has a linear inverse because the kernel is a closed subspace.
\end{proof}

A linear action of a prounipotent group on a one--dimensional vector space is trivial, so a normal subgoup of a prounipotent group isomorphic to $\mathbb G_a$ is central.

Now we come to our main application.

\begin{theorem} \label{alwaysextend} Let $E \supset F$ be a (possibly infinite) Picard--Vessiot extension with (pro)unipotent differential Galois group $\overline{G}$. Let
\[
1 \to \mathbb G_a \to G \to \overline{G} \to 1
\]
be a  extension which does not split as algebraic groups over $F$. Then there is a Picard--Vessiot extension $E_1 \supset F$ with differential Galois group $G$ such that $E_1 \supset E$ and such that the restriction map on differential Galois groups is the given map $G \to \overline{G}$.
\end{theorem}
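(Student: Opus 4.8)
The plan is to reduce the statement to Theorem~\ref{embeddingthm} by realizing $E$ explicitly as $F(\overline{G})$ and then reading off $E_1$ from the conclusion of that theorem; all the analytic work has already been done, so the remaining task is to verify that the hypotheses apply and that the Galois-theoretic bookkeeping comes out right.

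First I would use Proposition~\ref{nounipotentforms} to identify the Picard--Vessiot ring $R$ of $E$ with $F[\overline{G}]$ as an $F$-algebra and $\overline{G}$-module, and transport the derivation of $R$ across this isomorphism. Because the derivation of a Picard--Vessiot ring commutes with the differential Galois action and the isomorphism is $\overline{G}$-equivariant, the transported derivation $D$ on $F[\overline{G}]$ extends $D_F$ and is $\overline{G}$-equivariant, and $F(\overline{G})$ is differentially $F$-isomorphic to $E$. Up to this isomorphism we may take $E = F(\overline{G})$. I would then check the hypotheses of Theorem~\ref{embeddingthm}: $F[\overline{G}]$ is a polynomial ring, hence a UFD; $F(\overline{G})$ has no new constants since $E$ does (hypothesis~(1)); and for hypothesis~(2), if $0 \neq q \in F[\overline{G}]$ satisfies $q \mid D(q)$, then $(q)$ is a differential ideal of the simple differential ring $R \cong F[\overline{G}]$, forcing $(q) = F[\overline{G}]$ and $q$ a unit. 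This is exactly the verification asserted in the paragraph following Proposition~\ref{nounipotentforms}.

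Next I would confirm that the given extension is of the type Theorem~\ref{embeddingthm} handles. Since $G$ is an extension of a prounipotent group by $\mathbb{G}_a$ it is itself prounipotent, so its normal subgroup $\mathbb{G}_a$ is central and the extension is central; Proposition~\ref{unipotentsplit} shows it splits as varieties; and by hypothesis it does not split as algebraic groups over $F$. Applying Theorem~\ref{embeddingthm} then yields a derivation on $F[G]$ extending $D$, commuting with the $G$-action, for which $F(G)$ has no new constants.

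Finally, since $F(G) \supset F$ has no new constants and $G$ acts differentially, $F(G)$ is a Picard--Vessiot extension with differential Galois group $G$, as remarked before Theorem~\ref{embeddingthm}; I would set $E_1 = F(G)$. Then $E_1 \supset F(\overline{G}) = E$, and since $F(\overline{G}) = F(G)^{\mathbb{G}_a}$ corresponds to the kernel $\mathbb{G}_a$, the restriction map $G = \mathrm{Gal}(E_1/F) \to \mathrm{Gal}(E/F) = \overline{G}$ is the quotient $G \to G/\mathbb{G}_a = \overline{G}$, i.e.\ the given map. I expect no deep obstacle here: the one point requiring care is the bookkeeping across the identification of Proposition~\ref{nounipotentforms}, namely ensuring it is used as a \emph{differential} isomorphism so that $E_1$ genuinely contains $E$ and the restriction map is the given surjection rather than a twist of it. The substantive difficulty lives entirely in Theorem~\ref{embeddingthm}, of which this statement is essentially a corollary.
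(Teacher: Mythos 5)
Your proposal is correct and follows essentially the same route as the paper's own proof: identify $E$ with $F(\overline{G})$ via Proposition~\ref{nounipotentforms}, verify the hypotheses of Theorem~\ref{embeddingthm} (including hypothesis~(2) via differential simplicity of the Picard--Vessiot ring, and the variety splitting via Proposition~\ref{unipotentsplit}), and take $E_1=F(G)$. The paper's proof is terser, but the details you supply are exactly the ones it implicitly relies on.
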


\begin{proof} By Proposition \ref{nounipotentforms} we may assume $E$ is the quotient field of $F[\overline{G}]$ and so the latter satisfies the hypotheses of Theorem \ref{embeddingthm}. By Proposition \ref{unipotentsplit} the extension $1 \to \mathbb G_a \to G \to \overline{G} \to 1$ also satisfies the hypotheses of the theorem. Then $E_1=F(G)$ with the derivation of the theorem is the desired Picard--Vessiot extension.
\end{proof}

Theorem \ref{alwaysextend} applies of course when $\overline{G}$ is unipotent, and asserts that a solution of the embedding problem for extensions of unipotent groups by 
$\mathbb G_a$ always exists. We can now conclude that the differential Galois group of the compositum of the unipotent extensions of $F$ is projective.

\begin{theorem} \label{T:UPFprojective}
Let $U\Pi(F)$ be the differential Galois group of the compositum $F_u$ of all Picard--Vessiot extensions of $F$ with unipotent differential Galois group. Then $U\Pi(F)$ is a projective prounipotent group.
\end{theorem}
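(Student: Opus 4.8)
The plan is to assemble the pieces already developed in the paper according to the reduction spelled out in the Introduction. Recall that to prove $U\Pi(F)$ is projective as a prounipotent group, it suffices, by the reduction to $\mathbb{G}_a$-kernels quoted from \cite{ml}, to solve the following lifting problem: given a surjection $\alpha: A \to B$ of \emph{unipotent} groups whose kernel $K$ is isomorphic to $\mathbb{G}_a$, and a surjection $f: U\Pi(F) \to B$, produce a homomorphism $\phi: U\Pi(F) \to A$ with $\alpha \circ \phi = f$. As noted in the Introduction, the split case is trivial (take $\phi = \beta \circ f$ for a splitting $\beta$), so I would immediately restrict to the case where $\alpha$ is \emph{not} split.

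I would then translate the lifting problem into differential Galois theory. By the Galois correspondence for the prounipotent extension $F_u/F$, the surjection $f: U\Pi(F) \to B$ corresponds to an intermediate Picard--Vessiot extension $E \subset F_u$ of $F$ whose differential Galois group is the unipotent group $B = \overline{G}$; I would rename $B$ as $\overline{G}$ and $A$ as $G$ to match the notation of the earlier theorems. This is exactly the setup of Theorem \ref{alwaysextend}: we have a (finite, in fact) Picard--Vessiot extension $E \supset F$ with unipotent differential Galois group $\overline{G}$, together with a non-split extension $1 \to \mathbb{G}_a \to G \to \overline{G} \to 1$. Applying Theorem \ref{alwaysextend} yields a Picard--Vessiot extension $E_1 \supset E$ with differential Galois group $G$ such that the restriction map on Galois groups is the given surjection $G \to \overline{G}$.

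The final step is to see that $E_1$ lives inside $F_u$ and hence gives the desired $\phi$. Since $G$ is an extension of the unipotent group $\overline{G}$ by $\mathbb{G}_a$, the group $G$ is itself unipotent, so $E_1$ is a Picard--Vessiot extension of $F$ with unipotent differential Galois group and therefore is one of the extensions whose compositum defines $F_u$; thus $E_1 \subseteq F_u$. The inclusion $E_1 \subseteq F_u$ yields by restriction a surjection $\phi: U\Pi(F) \to G = A$, and the compatibility $E_1 \supset E$ together with the fact that the restriction $G \to \overline{G}$ is the given $\alpha$ guarantees $\alpha \circ \phi = f$. This exhibits the required lift for every $\mathbb{G}_a$-kernel lifting problem, so $U\Pi(F)$ is projective, and hence free by \cite[Prop. 2.8]{ml}.

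I do not expect a genuine obstacle here, since all the substantive work has been done in Theorem \ref{embeddingthm} (the cohomological construction of the $G$-equivariant derivation and the no-new-constants argument) and in Propositions \ref{nounipotentforms} and \ref{unipotentsplit} (which verify the hypotheses of Theorem \ref{embeddingthm}). The only point requiring a word of care is the passage from a homomorphism $\phi$ into $A$ to a \emph{surjective} one: in the non-split case any lift is automatically surjective, as observed in the Introduction, and concretely this is reflected in the fact that $E_1$ is strictly larger than $E$ with $E = E_1^{\mathbb{G}_a}$. One should also note that the reduction from arbitrary prounipotent $\alpha$ to the case where $A$ and $B$ are genuinely unipotent with $\mathbb{G}_a$-kernel is precisely what licenses us to work with finite Picard--Vessiot extensions inside $F_u$; this is the content of the reduction quoted from \cite{ml} and need not be reproved.
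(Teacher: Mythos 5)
Your proposal is correct and follows essentially the same route as the paper: reduce via \cite[Theorem 2.4]{ml} to lifting along non-split $\mathbb{G}_a$-kernel surjections of unipotent groups, translate the surjection $f$ into a Picard--Vessiot extension $E \subset F_u$ with group $B$, and apply Theorem \ref{alwaysextend} to obtain $E_1 \subseteq F_u$ realizing $A$ and hence the lift. The only step the paper spells out that you compress is the reduction to surjective $f$ (replacing $B$ by $f(U\Pi(F))$ and $A$ by the unipotent radical of its preimage), but that is exactly the reduction announced in the Introduction, so no essential difference remains.
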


\begin{proof} By \cite[Theorem 2.4, p.84]{ml}, it suffices to show that for any unipotent group $B$ and any extension of $A$ of $B$ by $\mathbb G_a$ a homomorphism $f: U\Pi(F)  \to B$ can be lifted to $A$. Note that $A$ is also unipotent. Let $\alpha: A \to B$ be the projection. If $f$ is not surjective, we can replace $B$ with $B^\prime=f(U\Pi(F))$ and $A$ with $A^\prime =R_u(\alpha^{-1}(B^\prime)$ (the unipotent radical of the inverse image) to obtain an extension $A^\prime \leq A$  of $B^\prime \leq B$ by $\mathbb G_a$ and a surjective homomorphism $U\Pi(F) \to B^\prime$. If this homomorphism can be lifted to $A^\prime$ then the same homomorphism lifts $f$ to $A$. As remarked above, if $A^\prime$ is a split extension of $B^\prime$ then the splitting produces the lift. Thus we can assume $A^\prime$ is a non-split extension of $B^\prime$. We drop the ``primes" and revert to the original notation. The surjection $f$ means that we have a Picard--Vessiot extension $E_B$ of $F$ with differential Galois group $B$ and a non--split exact sequence 
\[
1 \to \mathbb G_a \to A \to B \to 1.
\]
By Theorem \ref{alwaysextend} there is a differential Galois extension $E_A \supset F$ with differential Galois group $B$ (and hence a surjection $U\Pi(F) \to B$) and $E_B \supset E_A$ (which implies that the surjection lifts $f$).
\end{proof}

As noted in the introduction, for prounipotent groups projectives are free \cite[Proposition 2.8. p. 86]{ml}. 
Free prounipotent groups, like other free objects, are free on a subset. By  \cite[Defn. 2.1, p. 83]{ml} and \cite[Propostion 2.2, p. 83]{ml}, the free prounipotent group $U(I)$ on the subset $I$ is universal with respect to the property that for unipotent groups $U$, set maps $I \to U$ with all but finitely many elements going to the identity extend uniquely to morphisms $U(I) \to U$. By \cite[Lemma 2.3, p. 84]{ml}, the cardinality of the subset $I$ is the same as the dimension of $\text{Hom}(U(I), \mathbb G_a)$.

Thus Theorem \ref{T:UPFprojective} implies that $U\Pi(F)$ is free prounipotent as we record in Corollary \ref{closurefree}. In Theorem \ref{generators} below we determine the cardinality of a set on which it is free.

\begin{corollary} \label{closurefree} Let $F_u$ be the compositum of all the unipotent Picard--Vessiot extensions of $F$. Then the differential Galois group $U\Pi(F)$ of $F_u$ over $F$ is free prounipotent.
\end{corollary}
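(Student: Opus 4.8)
The plan is to obtain freeness as an immediate formal consequence of projectivity, which has already been established, together with the cited structural fact that the two notions coincide for prounipotent groups. Essentially no new work is required at this final step; the proof is a short deduction from results already in hand.

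First I would recall that Theorem \ref{T:UPFprojective} shows $U\Pi(F)$ to be projective in the category of prounipotent groups. Next I would invoke \cite[Proposition 2.8, p. 86]{ml}, which asserts that every projective prounipotent group is free. Combining the two yields at once that $U\Pi(F)$ is free prounipotent, which is the assertion of the corollary.

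The absence of any real obstacle here is by design: all of the difficulty resides upstream. The reduction of freeness to the solvability of embedding problems for central extensions by $\mathbb G_a$ was carried out in the introduction and at the start of Section \ref{S;embeggings}, and the decisive technical input---the construction, in the proof of Theorem \ref{embeddingthm}, of a $G$-equivariant derivation on $F[G]$ introducing no new constants, obtained by writing the relevant cocycle $\sigma \in Z^1(\overline{G}, F[\overline{G}])$ as a coboundary and then ruling out the split case via the factoriality hypotheses---is where the genuine content lies. Once Theorem \ref{T:UPFprojective} has packaged that content as projectivity of $U\Pi(F)$, the passage from projective to free is purely formal and requires only the citation above.
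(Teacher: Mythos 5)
Your proposal is correct and matches the paper exactly: the corollary is deduced by combining Theorem \ref{T:UPFprojective} (projectivity of $U\Pi(F)$) with \cite[Prop. 2.8, p. 86]{ml} (projective prounipotent groups are free), which is precisely the one-line argument the paper gives in the sentence immediately preceding the corollary.
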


Theorem \ref{alwaysextend} shows that if there is a $\mathbb G_a$ extension of a prounipotent differential Galois group over $F$, then the extension is realized as a differential Galois group. In \cite[Theorem 2.9, p.87]{ml}, it is shown that if a prounipotent group $G$ is not free, then $H^2(G,C) \neq 0$ (and conversely). This $H^2$  is derived functor cohomology in the category of rational $G$ modules, which by \cite{h} is also Hochschild cohomology (two cocycles modulo two coboundaries). Because of the fact, used above, that extensions of prounipotents by $\mathbb G_a$ split as varieties, Hochschild cohomology corresponds to extensions \cite[Proposition 2.3 p.190]{dg}. Thus all non-free prounipotent groups have non trivial extensions by $\mathbb G_a$, and conversely. We need to remark here that non trivial means non split as algebraic groups over $C$, whereas Theorem \ref{alwaysextend} refers to non split as algebraic groups over $F$. Actually the former implies the latter: for if $H^2(G,C)$ is non trivial, the same is true for extension to any algebraically closed field $\mathcal C$ over $C$. This follows from constructing an injective resolution of $C$ as a $G$ module whose terms are sums of the modules $C[G]$. Then tensoring this resolution over $C$ with $\mathcal C$, which is an exact functor, produces a resolution of $\mathcal C$ whose terms are sums of $\mathcal C[G]$, hence injective, and taking $G$ invariants and homology commutes with tensoring as well, so that $H^2(G_{\mathcal C}, \mathcal C)$ is isomorphic to $\mathcal C \otimes H^2(G,C)$.

It is possible that $F$ has no unipotent Picard--Vessiot extensions; for example, it may be Picard--Vessiot closed \cite{dartii}. In this situation the free prounipotent group of Corollary \ref{closurefree} is the free prounipotent group on no generators. It is possible that $F$ has a unique unipotent Picard--Vessiot extension with group $\mathbb G_a$, which is the free prounipotent group on one generator; this happens for $F=C$. In both these cases, the number of generators of the free prounipotent group is the dimension of $F/D(F)$ as a $C$ vector space. As we now show, this happens in general. We set the following notation:

\begin{notation} \label{nonderivatives}

Let $U\Pi =U\Pi(F)$ denote the differential Galois group of $F_u$ over $F$. 

Let $\{x_\alpha \in F| \alpha \in \mathcal A\}$ be such that their images in $F/D(F)$ are a basis over $C$. 

Let $y_\alpha \in F_u$ be such that $D(y_\alpha)=x_\alpha$. 

Let $G_\alpha$ be the differential Galois group of $F(y_\alpha)$ over $F$. Note that $G_\alpha$ is isomorphic to $\mathbb G_a$. 

\end{notation}

For the following lemmas, which describe some of the properties of the $x_\alpha$'s and $y_\alpha$'s, we recall some basic properties of elements of Picard--Vessiot extensions whose derivatives are in the base. Let $E \supseteq F$ be a Picard--Vessiot extension and let $z \in E$ with $D(z) \in F$. For any differential automorphisms $\sigma$ of $E$ over $F$,  $D(\sigma(z)-z)= \sigma(D(z))-D(z)=0$, since $D(z)\in F$ and $\sigma$ is trivial on $F$. So $\sigma(z) - z \in C$.  Suppose $z_i$, $1 \leq i \leq n$ are elements of $E$ with $D(z_i) \in F$ for all $i$. Then the previous observation shows that the subfield $E_0=F(z_1, \dots, z_n)$ of $E$ is stable under every differential automorphism of $E$ over $F$, which makes $E_0 \supseteq F$ a Picard--Vessiot extension. Call its differential Galois group $H$. Since the $z_i$ generate $E_0$ over $F$, the map $\phi: H \to C^n$ by $\tau \mapsto (\tau(z_1)-z_1, \dots, \tau(z_n) -z_n)$ is injective. It is also an algebraic group homomorphism, so $H$ is a commutative unipotent group. Suppose further that $z_1, \dots, z_n$ are algebraically independent over $F$. Then for any $(c_1, \dots c_n) \in C^n$ we can define an $F$ automorphism of $F(z_1, \dots, z_n)$ by $z_i \mapsto z_i + c_i$, $1 \leq i \leq n$. Since $D(z_i+c_i)=D(z_i)$, this automorphism  is differential and its image under $\phi$  is $(c_1, \dots, c_n)$. Thus $\phi$ is surjective and 
hence $\phi$ is an isomorphism.

\begin{lemma} \label{simpletranscendentalextension} Let $E$ be a Picard--Vessiot extension of $F$ with Galois group $H$. Suppose $z_i \in E$, $1 \leq i \leq n$ are  algebraically independent over $F$ such that 
\[
D(z_i) \in F \text{ for } 1 \leq i \leq n \text{ ; and}
\]
\[
E=F(z_1, \dots, z_n)
\]
Let $y \in E$ with $D(y) \in F$. Then there are $c_1, \dots, c_n \in C$ and $f \in F$ such that
\[
y= f+ \sum_1^n c_iz_i
\]
\end{lemma}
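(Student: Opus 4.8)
The plan is to analyze the structure of the Picard--Vessiot extension $E = F(z_1, \dots, z_n)$ by exploiting that its differential Galois group $H$ is isomorphic to the vector group $C^n$, via the map $\phi: H \to C^n$ sending $\tau \mapsto (\tau(z_1) - z_1, \dots, \tau(z_n) - z_n)$, exactly as set up in the preamble. The key observation is that an element $y \in E$ with $D(y) \in F$ behaves the same way under the Galois action: for every $\tau \in H$ we have $\tau(y) - y \in C$, so $y$ determines a function $\lambda: H \to C$ by $\lambda(\tau) = \tau(y) - y$. First I would verify that $\lambda$ is an additive group homomorphism $H \to \mathbb{G}_a$, since $\tau(\sigma(y)) - y = (\tau\sigma(y) - \sigma(y)) + (\sigma(y) - y)$ and each $\sigma(y) - y$ is a constant fixed by $\tau$. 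Thus $\lambda \in \operatorname{Hom}(H, \mathbb{G}_a)$.

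The crux of the argument is that $\operatorname{Hom}(H, \mathbb{G}_a) = \operatorname{Hom}(\mathbb{G}_a^n, \mathbb{G}_a)$ is exactly the space of $C$-linear maps $C^n \to C$, because the only algebraic group homomorphisms from a vector group to $\mathbb{G}_a$ are the linear functionals (in characteristic zero there are no additive Frobenius-type maps). Therefore $\lambda \circ \phi^{-1}: C^n \to C$ is linear, given by $(t_1, \dots, t_n) \mapsto \sum_i c_i t_i$ for some uniquely determined $c_1, \dots, c_n \in C$. Unwinding this through $\phi$, I get that for every $\tau \in H$,
\[
\tau(y) - y = \sum_{i=1}^n c_i\bigl(\tau(z_i) - z_i\bigr) = \tau\Bigl(\sum_{i=1}^n c_i z_i\Bigr) - \sum_{i=1}^n c_i z_i.
\]
This says precisely that the element $y - \sum_i c_i z_i$ is fixed by every $\tau \in H$.

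To finish, I would set $f := y - \sum_{i=1}^n c_i z_i$ and invoke the fundamental correspondence of Picard--Vessiot theory: the fixed field $E^H$ equals $F$. Since $f$ is fixed by all of $H$, we conclude $f \in F$, which yields the desired expression $y = f + \sum_{i=1}^n c_i z_i$. Uniqueness, if one wants it, follows because the $z_i$ are algebraically independent over $F$, so the coefficients $c_i$ and the element $f$ are determined.

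The main obstacle I anticipate is the identification $\operatorname{Hom}(\mathbb{G}_a^n, \mathbb{G}_a) = \operatorname{Hom}_C(C^n, C)$, i.e.\ confirming that every algebraic group homomorphism $\lambda$ arising this way is genuinely $C$-linear rather than merely additive. This rests on the characteristic zero hypothesis (which $F$ satisfies) ruling out additive but non-linear maps; since $C$ is the constant field and everything in sight is a morphism of algebraic groups over $C$, the vanishing of the relevant additive polynomials gives linearity. Everything else is bookkeeping with the Galois correspondence, which the paper has already made routine in the discussion immediately preceding the lemma.
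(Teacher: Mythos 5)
Your argument is correct in outline and reaches the right conclusion, but it routes through a different middle step than the paper. Both proofs start from the isomorphism $\phi: H \to C^n$, $\tau \mapsto (\tau(z_1)-z_1,\dots,\tau(z_n)-z_n)$, established in the discussion preceding the lemma, and both finish with $E^H=F$. The paper, however, never mentions $\operatorname{Hom}(H,\mathbb G_a)$: it simply picks $\sigma_i\in H$ with $\phi(\sigma_i)=e_i$, sets $c_i=\sigma_i(y)-y$, checks that $y-\sum c_jz_j$ is fixed by each $\sigma_i$, and concludes because the $\sigma_i$ generate $H$ as an algebraic group (their stabilizer is a closed subgroup). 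You instead package $\tau\mapsto \tau(y)-y$ as a character $\lambda\in\operatorname{Hom}(H,\mathbb G_a)$ and invoke the characteristic-zero fact that $\operatorname{Hom}(\mathbb G_a^n,\mathbb G_a)$ consists of the linear functionals. Your route is more conceptual and generalizes immediately (e.g.\ to the infinite-index version used later for $F_u^{ab}$), while the paper's is more elementary and sidesteps the one delicate point in yours.

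That delicate point is worth naming precisely: you identify the ``main obstacle'' as ruling out additive-but-nonlinear polynomial maps, but in characteristic zero that part is automatic. The real issue is earlier: your verification only shows $\lambda$ is an \emph{abstract} group homomorphism $H\to C$, and an abstractly additive map $C^n\to C$ is merely $\mathbb Q$-linear, which is not enough to write $\lambda\circ\phi^{-1}$ as $(t_1,\dots,t_n)\mapsto\sum c_it_i$. You need $\lambda$ to be a regular function on $H$, i.e.\ a morphism of algebraic groups. This is true and standard --- since $D(y)\in F$, the $F$-span of $1$ and $y$ is a differential submodule, so $y$ lies in the Picard--Vessiot ring $R$, and the comodule structure $R\to R\otimes_C C[H]$ sends $y\mapsto y\otimes 1+1\otimes\lambda$ with $\lambda\in C[H]$ --- but it must be said. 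Once regularity is in hand, your argument closes correctly. The paper's choice of finitely many generators $\sigma_i$ whose group is Zariski dense avoids ever having to know that $\lambda$ is regular, which is why its proof is shorter.
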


\begin{proof} As noted above, $\phi: H \to C^n$ by $\tau \mapsto (\tau(z_1)-z_1, \dots, \tau(z_n) -z_n)$ is an isomorphism. For each $i$, choose $\sigma_i \in H$ such that $\phi(\sigma)$ is the $n$-tuple with $1$ in position $i$ and $0$ elsewhere. Then the $\sigma_i$'s generate $H$ as an algebraic group. 
Let $y \in E$ with $D(y)\in F$. Then, as noted above, for each $i$, $\sigma_i(y) - y \in C$, say $\sigma_i(y)=y+c_i$.
Let $g=\sum_1^n c_jz_j$.
Then $\sigma_i(g)=g+c_i$ for each $i$, so $\sigma_i(y-g)=y-g$ for all $i$. Since the $\sigma_i$'s generate $H$, $\tau(y-g) =y-g$ for all $ \tau \in H$, which implies that $y-g=f \in F$ and hence $y=f +\sum_1^n c_jz_j$.
\end{proof}

\begin{lemma} \label{transcendance}  The elements  $y_\alpha$,  $ \alpha \in \mathcal A$, are algebraically independent over $F$.
\end{lemma}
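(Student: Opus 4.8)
The plan is to reduce to finite subfamilies and then induct. Since a family is algebraically independent precisely when each of its finite subfamilies is, it suffices to fix indices $\alpha_1,\dots,\alpha_n \in \mathcal A$ and prove that $y_{\alpha_1},\dots,y_{\alpha_n}$ are algebraically independent over $F$. I would induct on $n$, taking as inductive hypothesis that $y_{\alpha_1},\dots,y_{\alpha_{n-1}}$ are algebraically independent, so that $E := F(y_{\alpha_1},\dots,y_{\alpha_{n-1}})$ is a purely transcendental extension of $F$ in these elements. Since $D(y_{\alpha_i})=x_{\alpha_i}\in F$ for each $i$, the discussion preceding Lemma \ref{simpletranscendentalextension} shows that $E$ is a Picard--Vessiot extension of $F$ to which that lemma applies, with the $y_{\alpha_i}$ ($1\le i\le n-1$) playing the role of the $z_i$.

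For the inductive step, suppose for contradiction that $y_{\alpha_n}$ is algebraic over $E$. The key point is that $y_{\alpha_n}$ must then already lie in $E$: an element whose derivative lies in $F$ and which is algebraic over a no--new--constants extension of $F$ belongs to that extension (recall that $F_u$, being a compositum of Picard--Vessiot extensions, has constant field $C$). Granting this, $y=y_{\alpha_n}\in E$ has $D(y)=x_{\alpha_n}\in F$, so Lemma \ref{simpletranscendentalextension} produces $c_1,\dots,c_{n-1}\in C$ and $f\in F$ with $y_{\alpha_n}=f+\sum_{i=1}^{n-1}c_i y_{\alpha_i}$. Applying $D$ gives $x_{\alpha_n}-\sum_{i=1}^{n-1}c_i x_{\alpha_i}=D(f)\in D(F)$. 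This is a nontrivial $C$--linear relation among the images of $x_{\alpha_1},\dots,x_{\alpha_n}$ in $F/D(F)$ (the coefficient of $x_{\alpha_n}$ being $1$), contradicting the choice of the $x_\alpha$ as part of a $C$--basis of $F/D(F)$. Hence $y_{\alpha_n}$ is transcendental over $E$, completing the induction. The base case $n=1$ is the same argument: if $y_{\alpha_1}$ were algebraic over $F$ it would lie in $F$, forcing $x_{\alpha_1}=D(y_{\alpha_1})\in D(F)$ and contradicting that its image in $F/D(F)$ is a nonzero basis element.

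I expect the main obstacle to be the auxiliary fact invoked twice above: that a primitive (an element with derivative in $F$) which is algebraic over a no--new--constants extension $E\supseteq F$ already lies in $E$. I would prove this by the standard conjugate argument. In characteristic zero the derivation extends uniquely to the algebraic closure and commutes with every $E$--embedding, so each conjugate $y_i$ of $y=y_{\alpha_n}$ satisfies $D(y_i)=D(y)=x_{\alpha_n}$, since $x_{\alpha_n}\in F\subseteq E$ is fixed. Writing the minimal polynomial of $y$ over $E$ as $T^m+a_{m-1}T^{m-1}+\cdots+a_0$, we have $a_{m-1}=-\sum_i y_i$ and hence $D(a_{m-1})=-m\,x_{\alpha_n}$. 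Therefore $D\!\left(y+a_{m-1}/m\right)=0$, so $y+a_{m-1}/m$ is a constant; as $F_u$ has no new constants and $a_{m-1}/m\in E$, this constant lies in $C\subseteq E$, whence $y\in E$, as required.
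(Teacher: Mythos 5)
Your proof is correct, but the key step is handled by a genuinely different mechanism than the paper's. Both arguments ultimately funnel into the same endgame: once one knows that $y_{\alpha_n}$ lies in $E=F(y_{\alpha_1},\dots,y_{\alpha_{n-1}})$, Lemma \ref{simpletranscendentalextension} produces $y_{\alpha_n}=f+\sum c_i y_{\alpha_i}$, and differentiating contradicts the $C$--linear independence of the $x_\alpha$ in $F/D(F)$. The difference is how one gets from ``$y_{\alpha_n}$ is algebraic over $E$'' to ``$y_{\alpha_n}\in E$.'' The paper works on the group side: it embeds the Galois group $G_k$ of $F(y_1,\dots,y_k)$ into $\mathbb G_a^k$, notes that a proper closed subgroup of a vector group surjecting onto $\mathbb G_a^{k-1}$ must map isomorphically onto it, deduces $E_k=E_{k-1}$ from the Galois correspondence, and at the end converts the statement $G_k\cong\mathbb G_a^k$ into the transcendence--degree claim via the identification of $\operatorname{trdeg}(E_k/F)$ with $\dim G_k$ (Kolchin's theorem). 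You instead argue entirely on the field side with the classical trace argument: the conjugates of a primitive over a no--new--constants extension all have the same derivative, so $y+a_{m-1}/m$ is a constant, forcing $y\in E$. Your route is more elementary and self--contained --- it needs only the uniqueness of the extension of a derivation to a separable algebraic extension and the fact that $F_u$ has constant field $C$ --- whereas the paper's route additionally yields the structural information that the Galois group of $F(y_1,\dots,y_k)$ is exactly $\mathbb G_a^k$, which it reuses in the discussion of $F_u^{ab}$ following the lemma. Both are complete; just make sure you record somewhere the standard facts you invoke (unique extension of $D$ to $\overline{E}$ commuting with $E$--embeddings, and that $F_u$ has no new constants), since the paper does not state them explicitly in this form.
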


\begin{proof} Let $y_1, \dots, y_m$ be a subset of the $y_\alpha$'s.
For any $1 \leq k \leq m$ the subfield $E_k=F(y_1, \dots, y_k)$ of $F_u$  is a Picard--Vessiot extension. Let $G_k$ be the differential Galois group of $E_k$ over $F$. If $\sigma \in G_k$ then $\sigma \mapsto (\sigma(y_1) -y, \dots , \sigma(y_k)-y_k)$ defines a homomorphism $\phi_k: G_k \to \mathbb G_a^k$. We use induction on  $m$ to prove that  $\phi_m$ is an isomorphism and that the elements $y_i, \dots, y_m$ are algebraically independent over $F$.
The case $m=1$ is trivial. So suppose the result holds for $m=k-1$ and consider the case $m=k$. Consider $E_k \supset E_{k-1}$. These are Picard--Vessiot extensions of $F$ with differential Galois groups $G_k$ and $G_{k-1}$ respectively,  $G_{k-1}$ is a product of $k-1$ copies of $\mathbb G_a$ by the induction hypothesis and $G_k$ is a subgroup of $k$ copies  of $\mathbb G_a$. Since $\psi: G_k \to G_{k-1}$ is onto, if $G_k$ is not $k$ copies $\psi$ is an isomorphism and $E_k=E_{k-1}$. In particular, $y_k \in E_{k-1}$. Since $D(y_k)=x_k \in F$,  
we conclude from Lemma \ref{simpletranscendentalextension} that $y_k=f+ \sum_1^{k-1}c_iy_i$ with $c_i \in C$ and $f \in F$. Apply $D$ to this equation. We find $x_k=D(f)+\sum_1^{k-1}c_ix_i$, 
which is a relation of linear dependence of $x_1, \dots, x_k$ modulo $D(F)$. This contradiction implies that $G_k$ is actually a product of $k$ copies of $\mathbb G_a$. The transcendence degree of $E_k$ over $F$ is equal to that of the function field $F(G_k)$, and since we now know this latter is $k$, we  know the transcendence degree of $E_k=F(y_1, \dots, y_k)$ over $F$ is $k$, so $y_1, \dots, y_k$ are algebraically independent over $F$.
\end{proof}

We next consider the subfield $F_u^{ab}=F(\{y_\alpha \vert \alpha \in \mathcal A\})$ of $F_u$ generated over $F$  the $y_\alpha$'s. (The notation will be explained below). For $\sigma \in U\Pi$, we know that $\sigma(y_\alpha)-y_\alpha \in C$, so that $F_u^{ab}$ is a (possibly infinite) Picard--Vessiot extension of $F$. Let $H$ be the (pro)algebraic differential Galois group of $F_u^{ab}$ over $F$. Just as in the case with finitely many generators, the map $\phi: H \to \prod_{\mathcal A} C$ which sends $\sigma$ to the tuple whose $\alpha$ entry is  $\sigma(y_\alpha)-y_\alpha$ is injective since the $y_\alpha$ generate $E$. Since the $y_\alpha$'s are algebraically independent over $F$ (Lemma \ref{transcendance}), given any $\mathcal A$ tuple of elements of $C$ we can define an element of $H$ as the automorphism which sends $y_\alpha$ to $y_\alpha$ plus the $\alpha^\text{th}$ entry of the tuple, which shows that $\phi$ is also surjective and thus an isomorphism. 

Prounipotent group homomorphisms $H=\prod_{\mathcal A} C \to \mathbb G_a$ are continuous in the product topology.  We note for use below that $\text{Hom}(H, \mathbb G_a)=\text{Hom}(\prod_{\mathcal A} C, \mathbb G_a)$ (prounipotent group homomorphisms) is a $C$ vector space of dimension the cardinality of $\mathcal A$.

Thus $H$ is an abelian prounipotent quotient of $U\Pi$. We will see it is the maximum such:

\begin{lemma} \label{characterbasis} Let $E \subseteq F_u$ be a (possibly infinite) Picard--Vessiot extension of $F$ inside $F_u$ such that the differential Galois group $M$ of $E$ over $F$ is abelian. Then $E \subseteq F_u^{ab}$. Consequently, the differential Galois group of $F_u^{ab}$ over $F$is the maximal abelian quotient $U\Pi^{ab}$ of $U\Pi$. Moreover, $\text{Hom}(U\Pi, \mathbb G_a)=\text{Hom}(U\Pi^{ab},\mathbb G_a)$ is a $C$ vector space of dimension the cardinality of 
$\mathcal A$.
\end{lemma}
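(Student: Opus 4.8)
The plan is to reduce everything to a single containment: every abelian Picard--Vessiot subextension $E \subseteq F_u$ lies in $F_u^{ab}$. Once that is in hand, the ``maximal abelian quotient'' and $\mathrm{Hom}$ assertions follow formally from the Galois correspondence together with the facts already recorded about $H = \mathrm{Gal}(F_u^{ab}/F)$, namely that $H \cong \prod_{\mathcal A} C$ and that $\mathrm{Hom}(H,\mathbb G_a)$ is a $C$-vector space of dimension $|\mathcal A|$. The heart of the matter is to show that such an $E$ is generated over $F$ by \emph{integrals}, that is, by elements $y \in E$ with $D(y) \in F$; once this is known, the reduction of each integral into $F_u^{ab}$ is the same direct computation that underlies Lemmas \ref{simpletranscendentalextension} and \ref{transcendance}.

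For the generation step, I would first note that $M$, being an abelian (pro)unipotent group in characteristic zero, is a vector group: the group law on a unipotent group transports to its (complete) Lie algebra by the Baker--Campbell--Hausdorff formula, and in the abelian case the bracket vanishes, so $M \cong \prod_{\mathcal I}\mathbb G_a$ as a proalgebraic group. By Proposition \ref{nounipotentforms} the Picard--Vessiot ring $R$ of $E$ is $M$-equivariantly isomorphic to $F[M]$, which is thus a polynomial ring $F[\{t_i \mid i \in \mathcal I\}]$ on coordinate functions $t_i$ satisfying $\sigma \cdot t_i = t_i + \chi_i(\sigma)$ with $\chi_i(\sigma)\in C$ (this is precisely the translation action of $\mathbb G_a$ on its coordinate function, Notation \ref{Ga}). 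For each $i$ the element $D(t_i)$ is then $M$-invariant, since $\sigma(D(t_i)) = D(\sigma(t_i)) = D(t_i + \chi_i(\sigma)) = D(t_i)$; as the $M$-invariants of $R$ equal $R \cap E^M = F$, this gives $D(t_i) \in F$. Hence $E = \mathrm{Frac}(R) = F(\{t_i\})$ is generated by integrals.

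Next I would fix one such generator $y = t_i$ and set $x = D(y) \in F$. Writing the class of $x$ in $F/D(F)$ as a finite $C$-combination of the chosen basis $\{x_\alpha\}$, one has $x = D(f) + \sum_\alpha c_\alpha x_\alpha$ for some $f \in F$ and finitely many $c_\alpha \in C$. Then $y - f - \sum_\alpha c_\alpha y_\alpha$ has vanishing derivative, so it lies in $C$, whence $y \in F(\{y_\alpha\}) = F_u^{ab}$. Since this holds for every generator $t_i$, we conclude $E \subseteq F_u^{ab}$.

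Finally, $F_u^{ab}$ is itself an abelian Picard--Vessiot extension, with group $H \cong \prod_{\mathcal A} C$ as established before the statement, and by the containment just proved it contains every abelian subextension of $F_u$; therefore it is the maximal abelian subextension and $H = U\Pi^{ab}$. Since any morphism from $U\Pi$ to the abelian group $\mathbb G_a$ factors through its maximal abelian quotient, $\mathrm{Hom}(U\Pi, \mathbb G_a) = \mathrm{Hom}(U\Pi^{ab}, \mathbb G_a) = \mathrm{Hom}(H, \mathbb G_a)$, which was already noted to have $C$-dimension $|\mathcal A|$. I expect the main obstacle to be the generation step, where one needs both that $M$ is a vector group and that $R \cong F[M]$ via Proposition \ref{nounipotentforms}; the possibly infinite index set $\mathcal I$ causes no trouble, since each generator is treated one at a time.
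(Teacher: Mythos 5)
Your proposal is correct and follows essentially the same route as the paper: identify $M$ with a vector group, use Proposition \ref{nounipotentforms} to realize the Picard--Vessiot ring as $F[M]$ with translation action on the coordinate functions, deduce that each such coordinate is an integral of an element of $F$, reduce modulo $D(F)$ against the basis $\{x_\alpha\}$ to land in $F_u^{ab}$, and then conclude the maximality and $\mathrm{Hom}$ statements formally. The only cosmetic differences are that you justify the vector-group structure of $M$ explicitly (the paper asserts it) and phrase the identification of $U\Pi^{ab}$ directly via maximality rather than by comparing the two kernels as the paper does.
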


\begin{proof} $M$ is a product of copies of $\mathbb G_a$ indexed by some set $\mathcal S$. For $s \in \mathcal S$, let $p_s:M \to C$ be the projection on the $s^\text{th}$ factor; so $p_s \in C[M]$.

For $m \in M$, $m \cdot p_s = p_s+p_s(m)$. Since $C[M]$ is a polynomial ring over $C$ in the $p_s$'s, $F[M]$ is polynomial ring over $F$ in the $p_s$'s, and with the same $M$ action. By Proposition \ref{nounipotentforms} the Picard--Vessiot ring $R$ of $E$ is isomorphic to $F[M]$ as an $F$ algebra and as a left $M$ module. Let $z_s \in R$ correspond to $p_s$. We will see that $z_s \in F_u^{ab}$ for all $s \in \mathcal S$, which will prove the first assertion. So fix $s$ and let $z=z_s$ and $p=p_s$. 
As in Notation \ref{Ga}, $M$ acts on the left on $C[M]$ by acting on the right on $M$, so 
for $m \in M$, $m\cdot z =z+p(m)$ with $p(m) \in C$. Then $m\cdot D(z)=D(m\cdot z)=D(z)+D(p(m))=D(z)$ for all $m$, so $D(z) \in E^M=F$. Let $x=D(z)$.  Then $x$ can be expressed, modulo $D(F)$, as a $C$ linear combination of $x_\alpha$'s, where the $\alpha$'s in question come from a finite subset $\mathcal F$ of $\mathcal A$. 
Renumbering, we write $x=\sum_{k=1}^nc_kx_k + D(w)$, with $z \in F$, where $x_k$ is as in Notation \ref{nonderivatives}, with $k$ replacing $\alpha$.
Then $D(z-\sum c_ky_k -w)=0$, where $y_k$ is as in Notation \ref{nonderivatives}, with $k$ replacing $\alpha$, so $z=(\sum c_ky_k)+w+c$, where $c$ is a constant. In particular, $z_s=z \in F(y_1, \dots, y_n)$ so $E \subseteq F(y_1, \dots, y_n) \subseteq F_u^{ab}$. Thus $E \subseteq F_u^{ab}$. Let $K$ be the kernel of $U\Pi \to U\Pi^{ab}$ and let $L$ be the kernel of the restriction of $U\Pi$ to $F_u^{ab}$. Let $E$ be the fixed field of $K$. Since $U\Pi/K$ is abelian, by the above $E \subseteq F_u^{ab}$, which means that $L \leq K$. On the other hand, since $U\Pi/L$ is abelian, $K \leq L$. 
Thus $K=L$ and the differential Galois group of $F_u^{ab}$ over $F$ is the maximal abelian quotient $U\Pi^{ab}$ of $U\Pi$, proving the second assertion. For the final statement, we note that, in the notation of the discussion preceding the Lemma, the differential Galois group $H$ of $F_u^{ab}$ over $F$ has the property that $\text{Hom}(H, \mathbb G_a)$ is a $C$ vector space of dimension the cardinality of $\mathcal A$.
\end{proof}

Lemma \ref{characterbasis} along with \cite[Prop. 2.8, p.86]{ml} imply that the group $U\Pi$, which we know to be free prounipotent, is free prounipotent on a set of cardinality that of $\mathcal A$. We conclude by recording these results:

\begin{theorem}\label{generators} Let $F_u$ be the compositum of all the unipotent Picard--Vessiot extensions of $F$. Then the differential Galois group $U\Pi(F)$ of $F_u$ over $F$ is free prounipotent on a set of cardinality equal to the $C$ vector space dimension of $F/D(F)$.
\end{theorem}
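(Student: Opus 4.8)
The plan is to combine the freeness already established with a count of the generators, which for a free prounipotent group is measured by the dimension of its space of homomorphisms into $\mathbb{G}_a$. First I would invoke Corollary \ref{closurefree}, which tells us that $U\Pi(F)$ is free prounipotent; hence $U\Pi(F) \cong U(I)$ for some index set $I$, and the entire task reduces to determining the cardinality of $I$.

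Next I would recall \cite[Lemma 2.3, p. 84]{ml}: for a free prounipotent group $U(I)$ the cardinality of $I$ equals the $C$-dimension of $\text{Hom}(U(I), \mathbb{G}_a)$. Applying this to $U\Pi(F)$ reduces the problem to computing $\dim_C \text{Hom}(U\Pi, \mathbb{G}_a)$. Since $\mathbb{G}_a$ is abelian, every homomorphism $U\Pi \to \mathbb{G}_a$ factors through the maximal abelian quotient, so that $\text{Hom}(U\Pi, \mathbb{G}_a) = \text{Hom}(U\Pi^{ab}, \mathbb{G}_a)$.

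The key input is then Lemma \ref{characterbasis}, which identifies $F_u^{ab}$ as the fixed field of the kernel of $U\Pi \to U\Pi^{ab}$ and shows that $\text{Hom}(U\Pi, \mathbb{G}_a) = \text{Hom}(U\Pi^{ab}, \mathbb{G}_a)$ is a $C$ vector space whose dimension equals the cardinality of $\mathcal{A}$. Finally, by the choice made in Notation \ref{nonderivatives}, the set $\{x_\alpha \mid \alpha \in \mathcal{A}\}$ projects to a $C$-basis of $F/D(F)$, so the cardinality of $\mathcal{A}$ is exactly $\dim_C(F/D(F))$. Chaining these identifications yields $|I| = \dim_C \text{Hom}(U\Pi, \mathbb{G}_a) = |\mathcal{A}| = \dim_C(F/D(F))$, which is the desired statement.

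As for the main obstacle: all of the genuine difficulty has already been discharged in the preceding results, namely the freeness in Corollary \ref{closurefree} (which rests on the embedding-problem solution of Theorem \ref{embeddingthm}) and the explicit description of the abelianization in Lemma \ref{characterbasis} (which rests on the algebraic independence of the $y_\alpha$ from Lemma \ref{transcendance}). Consequently the proof at this stage is essentially bookkeeping, and the only point needing care is to verify that the single invariant $\dim_C \text{Hom}(U\Pi, \mathbb{G}_a)$ is simultaneously the cardinality of a free generating set, via \cite{ml}, and the dimension of $F/D(F)$, via Lemma \ref{characterbasis} together with Notation \ref{nonderivatives}; once both identifications are in hand there is nothing further to prove.
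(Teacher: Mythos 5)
Your proposal is correct and follows the paper's own argument essentially verbatim: freeness from Corollary \ref{closurefree}, the identification of the cardinality of a free generating set with $\dim_C \text{Hom}(U\Pi, \mathbb G_a)$ via \cite[Lemma 2.3, p. 84]{ml}, and the computation of that dimension as the cardinality of $\mathcal A$ (hence of a basis of $F/D(F)$) via Lemma \ref{characterbasis}. Nothing is missing.
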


\begin{proof} The only thing remaining to be observed is the cardinality assertion. We have recalled above that for a free prounipotent group $U$ on a set $I$ the cardinality of $I$ is the $C$ vector space  dimension of $\text{Hom}(U, \mathbb G_a)$ 
 \cite[Lemma 2.3, p. 84]{ml}. For $U=U\Pi$,  by Lemma \ref{characterbasis} $\text{Hom}(U\Pi, \mathbb G_a)$ has $C$ dimension  the cardinality of $\mathcal A$.
 \end{proof}
 
If $\text{dim}_C(F/D(F))$ is infinite, then $U\Pi$ is free prounipotent on infinitely many generators. It follows that any unipotent group $U$ is a homomorphic image of $U\Pi$. If $K  \leq U\Pi$ is the kernel of the surjection $U\Pi \to U$ then $E=F_u^K$ is a Picard--Vessiot extension of $F$ with $G(E/F) \cong H$. So we conclude the following about the unipotent Inverse Problem for $F$:

\begin{corollary} \label{unipotentinverse} If $F/D(F)$ is an infinite dimensional $C$ vector space, then every unipotent algebraic group over $C$ occurs as a differential Galois group over $F$.
\end{corollary}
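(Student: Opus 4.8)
The plan is to derive the Corollary directly from Theorem \ref{generators} together with the differential Galois correspondence. By Theorem \ref{generators}, $U\Pi(F)$ is free prounipotent on a set $I$ whose cardinality is $\text{dim}_C(F/D(F))$, which by hypothesis is infinite. It therefore suffices to prove the purely group-theoretic statement that a free prounipotent group on an infinite set surjects onto every unipotent algebraic group $U$ over $C$, and then to convert such a surjection into a Picard--Vessiot realization of $U$.

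First I would produce a homomorphism $\phi: U\Pi(F) \to U$ using the universal property of the free prounipotent group recalled before Notation \ref{nonderivatives}: any set map $I \to U$ sending all but finitely many elements to the identity extends uniquely to a morphism of prounipotent groups. The content is thus to choose the images of finitely many generators so that the resulting $\phi$ is \emph{surjective}, and this is the step I expect to be the main obstacle. The idea is that $U$ is topologically finitely generated: since $U$ is a finite--dimensional unipotent group in characteristic zero, its abelianization $U^{ab}=U/[U,U]$ is a vector group $\mathbb{G}_a^n$ with $n=\text{dim}_C\,\text{Hom}(U,\mathbb{G}_a)<\infty$. Choosing $u_1,\dots,u_n\in U$ whose images in $U^{ab}$ form a $C$-basis, the Zariski closure of the abstract subgroup they generate surjects onto $U^{ab}$, because in characteristic zero the only algebraic subgroups of $\mathbb{G}_a^n$ are the $C$-subspaces, so the closure of $\mathbb{Z}u_1+\cdots+\mathbb{Z}u_n$ is its $C$-span. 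A Frattini--type argument for connected unipotent groups, namely that a closed subgroup $V$ with $V\cdot[U,U]=U$ must equal $U$, then shows that $u_1,\dots,u_n$ topologically generate $U$.

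With this in hand, since $I$ is infinite I may select $n$ distinct generators, send them to $u_1,\dots,u_n$ and all remaining generators to the identity, and extend to $\phi: U\Pi(F)\to U$. Because $\phi$ factors through a finite--dimensional algebraic quotient of the proalgebraic group $U\Pi(F)$, its image is a closed subgroup of $U$; as this image contains the topological generators $u_1,\dots,u_n$, it is all of $U$, so $\phi$ is surjective. Finally I would invoke the differential Galois correspondence: setting $K=\ker\phi$, the fixed field $E=F_u^{\,K}$ is a Picard--Vessiot extension of $F$ whose differential Galois group is $U\Pi(F)/K\cong U$. Hence every unipotent algebraic group over $C$ occurs as a differential Galois group over $F$, which is the assertion of the Corollary.
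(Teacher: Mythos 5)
Your proof is correct and takes essentially the same route as the paper: Theorem \ref{generators} gives freeness of $U\Pi(F)$ on an infinite set, one surjects the free prounipotent group onto the given unipotent group $U$, and the Galois correspondence converts the kernel into a Picard--Vessiot realization. The only difference is that you spell out the topological-generation and Frattini-type argument for why the free group surjects onto $U$, a step the paper simply asserts with ``it follows that any unipotent group $U$ is a homomorphic image of $U\Pi$''; your justification of that step is sound.
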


\section{Projective and Free Prounipotent Groups}  \label{S:projfree}

We have used throughout the result \cite[Prop. 2.8, p. 86]{ml} that projective prounipotent groups are free. \emph{A fortiori}, a prounipotent group which is projective as a proalgebraic group is free. And it is elementary to see that a free prounipotent group is projective. We record these observations:

\begin{proposition} \label{P:projectivefree}
Let $U$ be a prounipotent group. Then the following are equivalent:
\begin{enumerate}
\item $U$ is a free prounipotent group\\
\item $U$ is projective as a prounipotent group\\
\item $U$ is projective as a proalgebraic group.
\end{enumerate}
\end{proposition}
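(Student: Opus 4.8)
The plan is to prove the two equivalences $(1)\Leftrightarrow(2)$ and $(2)\Leftrightarrow(3)$ separately. Two of the four implications are essentially free: $(2)\Rightarrow(1)$ is precisely \cite[Prop. 2.8, p.86]{ml}, and $(3)\Rightarrow(2)$ is immediate, because every surjection of prounipotent groups is in particular a surjection of proalgebraic groups, so the lifting property demanded by $(3)$ specializes directly to the one demanded by $(2)$. The work therefore lies in $(1)\Rightarrow(2)$ and $(2)\Rightarrow(3)$.

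For $(1)\Rightarrow(2)$ I would use the universal property of the free prounipotent group together with the reduction already recalled in the introduction. Write $U=U(I)$, free on the set $I$. By \cite[Theorem 2.4, p.84]{ml} it suffices to lift along a surjection $\alpha: A \to B$ with $A$ and $B$ unipotent, kernel isomorphic to $\mathbb G_a$, and with $f: U \to B$ surjective. Since $B$ is unipotent, $f$ corresponds to a set map $I \to B$ sending all but finitely many generators to the identity. Because $\alpha$ is surjective I may choose, for each of the finitely many generators $i$ with $f(i)\neq e$, an element $a_i \in A$ with $\alpha(a_i)=f(i)$, and send the remaining generators to $e$. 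This data again sends all but finitely many generators to the identity, so by the defining universal property it extends to a morphism $\phi: U \to A$; and $\alpha\circ\phi$ and $f$ agree on the generating set $I$, hence coincide. This is the elementary direction noted in the text.

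The main obstacle is $(2)\Rightarrow(3)$, where the target of a lifting problem need no longer be prounipotent. Given a surjection $\alpha: A \to B$ of proalgebraic groups and a morphism $f: U \to B$, I would first replace $B$ by the image $B'=f(U)$, which is closed and prounipotent since it is a quotient of the prounipotent group $U$, and correspondingly replace $A$ by $A'=\alpha^{-1}(B')$. Let $R_u(A')$ denote the prounipotent radical of $A'$, so that $A'/R_u(A')$ is pro-reductive. The crucial step is to show $\alpha(R_u(A'))=B'$: the subgroup $\alpha(R_u(A'))$ is normal in $B'$, and the quotient $B'/\alpha(R_u(A'))$ is at once a quotient of the pro-reductive group $A'/R_u(A')$, hence pro-reductive, and a quotient of the prounipotent group $B'$, hence prounipotent, so it is trivial. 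Thus $\alpha$ restricts to a surjection $R_u(A') \to B'$ of prounipotent groups, and applying $(2)$ to this surjection and to $f: U \to B'$ yields a morphism $\phi: U \to R_u(A') \subseteq A$ with $\alpha\circ\phi=f$. I expect the only point needing care to be the passage of the pro-reductive/prounipotent dichotomy to the inverse limit, namely verifying that a proalgebraic group which is simultaneously prounipotent and pro-reductive is trivial; this follows from the corresponding statement for algebraic groups by taking limits.
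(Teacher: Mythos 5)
Your proposal is correct and follows essentially the same route as the paper: the cheap implications via \cite[Prop. 2.8]{ml}, the universal-property argument for $(1)\Rightarrow(2)$ using \cite[Theorem 2.4]{ml} and the finiteness of $\{i \in I \mid f(i)\neq 1\}$, and for $(2)\Rightarrow(3)$ the passage to the unipotent radical of $\alpha^{-1}(f(U))$. The only difference is cosmetic: the paper first reduces to algebraic $A$ and $B$ via \cite[Proposition 4, p.~30]{blmm} and merely asserts that the unipotent radical surjects onto $f(U)$, whereas you work pro-level and supply the (correct) reductive-versus-unipotent dichotomy argument for that surjectivity.
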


\begin{proof} 
Let $U=U(I)$ be a free prounipotent group on the set $I$. In \cite{ml}, prounipotent groups which are projective as prounipotent groups are said to have the \emph{lifting property}. By \cite[Theorem 2.4, p.84]{ml}, to verify the lifting property it suffices to show that for any surjection $\alpha: A \to B$ of unipotent groups with kernel $\mathbb G_a$ and any morphism $f:U(I) \to B$ there is a morphism $g:U(I) \to A$ with $f=\alpha \circ g$. By \cite[Proposition 2.2, p. 83]{ml}, the set $I_0 =\{i \in I \vert f(i) \neq 1\}$ is finite. For each $i \in I_0$, choose $x_i \in A$ such that $\alpha(x_i) =f(i)$. By \cite[Proposition 2.2, p. 83]{ml} again, there is a homomorphism $g:U(I) \to A$ such that $g(i)=x_i$ for $i \in I_0$ and $g(i)=1$ for $i \notin I_0$. Since $\alpha(g(i))=f(i)$ for $i \in I$, again by \cite[Proposition 2.2, p. 83]{ml} $f=\alpha \circ g$. Thus $U(I)$ is projective as a prounipotent group.

Suppose $U$ is a projective prounipotent group.
To show that $U$ is projective as a proalgebraic group, by \cite[Proposition 4, p. 30]{blmm} we need to show that if $\alpha: A \to B$ is a surjection of algebraic groups and $f:U \to B$ is a morphism then there is a morphism $\phi: U \to A$ with $f=\alpha \circ \phi$. Since $U$ is prounipotent, $f(U)$ is a unipotent subgroup of $B$. Since $\alpha$ is surjective, its restriction to the unipotent radical 
$R_u(\alpha^{-1}(f(U))$ is surjective to $f(U)$. Since $U$ is assumed projective in the category of prounipotent groups, there is $\phi_0: U \to R_u(\alpha^{-1}(f(U)$ such that $f=\alpha \circ \phi_0i$. Then $\phi_0$ composed with the inclusion of 
$R_u(\alpha^{-1}(f(U)$ into $A$ is the desired $\phi$. Thus $U$ is projective as a proalgebraic group.

Suppose $U$ is projective as a proalgebraic group. Then it is \emph{a fortiori} projective as a prounipotent group, which means it has the lifting property of \cite{ml}, and hence, as noted above,  by \cite[Proposition 2.8, p.86]{ml} is free prounipotent.
\end{proof}

In \cite[Theorem 2.4, p. 84]{ml}, it is shown that the property of a prounipotent group being projective with respect to all short exact sequences of unipotent groups is equivalent to the property of being projective with respect to those sequences which have $\mathbb G_a$ kernel. As a corollary of the proof of Theorem \ref{T:UPFprojective} we can slightly strengthen that result.

\begin{corollary}\label{C:freetest} A prounipotent group $U$ is free if and only if for every non-split surjective homomorphism $\alpha: A \to B$ of unipotent groups with kernel $K$ isomorphic to $\mathbb G_a$   and for every  surjective homomorphism $f: U \to B$ of prounipotent  groups there is a  surjective homomorphism $\phi: U \to A$ of prounipotent groups such that $f=\alpha \circ \phi$.
\end{corollary}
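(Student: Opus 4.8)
The plan is to prove both implications by reducing to the lifting property for $\mathbb G_a$-kernel surjections, which by Proposition \ref{P:projectivefree} together with \cite[Theorem 2.4, p. 84]{ml} characterizes freeness. The only genuinely new content beyond that characterization is the interplay between non-splitness of $\alpha$ and surjectivity of the lift $\phi$, so I would isolate that as the key observation and feed it into both directions. Throughout I would use two structural facts: the image of a morphism of prounipotent groups is a closed subgroup, and $\mathbb G_a$ has no proper nontrivial closed subgroups.

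For the forward direction, suppose $U$ is free. By Proposition \ref{P:projectivefree} it is projective as a prounipotent group, so given a non-split $\alpha: A \to B$ with kernel $K \cong \mathbb G_a$ and a surjective $f: U \to B$, the lifting property already supplies \emph{some} $\phi: U \to A$ with $f = \alpha \circ \phi$. The work is to upgrade this to a surjection. I would note that $\phi(U)$ is a closed subgroup of $A$ whose image under $\alpha$ is all of $B$ (because $f$ is surjective), so $\phi(U)\cdot K = A$. Then $\phi(U) \cap K$ is a closed subgroup of $\mathbb G_a$, hence either trivial or all of $K$. The trivial case would exhibit $\phi(U)$ as a complement to $K$ and thereby split $\alpha$, contradicting the hypothesis; so $K \subseteq \phi(U)$ and therefore $\phi(U) = A$, i.e. $\phi$ is surjective.

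For the converse, suppose the stated lifting condition holds; I want to deduce freeness, again via Proposition \ref{P:projectivefree} and \cite[Theorem 2.4, p. 84]{ml}, which reduce the task to producing, for every surjection $\alpha: A \to B$ of unipotent groups with $\mathbb G_a$ kernel and every morphism $f: U \to B$, \emph{a} lift (not necessarily surjective). I would run the two reductions already used in the proof of Theorem \ref{T:UPFprojective}. First, if $f$ is not surjective, replace $B$ by $B' = f(U)$ and $A$ by $A' = \alpha^{-1}(B')$; since $A$ and $B$ are unipotent, so are $A'$ and $B'$, the map $A' \to B'$ still has kernel $\mathbb G_a$, and $f: U \to B'$ is now surjective, while any lift to $A'$ composed with the inclusion $A' \hookrightarrow A$ gives a lift to $A$. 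Second, if $\alpha': A' \to B'$ splits, the splitting composed with $f$ is a lift; otherwise $\alpha'$ is non-split with $f$ surjective, which is exactly the situation covered by the hypothesis, and it provides a (surjective, \emph{a fortiori} valid) lift. In every case a lift exists, so $U$ has the lifting property and hence, by \cite[Proposition 2.8, p.86]{ml}, is free.

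The main obstacle is the surjectivity argument in the forward direction: the assertion that a non-split extension forces every lift over it to be surjective. Everything there hinges on the two structural facts noted above, which dichotomize $\phi(U) \cap K$ and let non-splitness rule out the bad case; I would state them explicitly. The remainder is a routine repackaging of the reductions already appearing in Theorem \ref{T:UPFprojective} and its introduction, so I would keep those steps brief.
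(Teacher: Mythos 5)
Your proposal is correct and follows essentially the same route as the paper: the converse direction is exactly the paper's proof (reduce via the image of $f$ and the split case to the situation covered by the hypothesis, then invoke \cite[Theorem 2.4, p. 84]{ml}), and your simplification of taking $A'=\alpha^{-1}(B')$ rather than its unipotent radical is harmless since closed subgroups of unipotent groups are unipotent. The only addition is that you write out the forward direction's surjectivity claim (a lift over a non-split $\mathbb G_a$-extension must be surjective, via the dichotomy for closed subgroups of $\mathbb G_a$), which the paper asserts in its introduction without proof; your argument for it is valid.
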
 

\begin{proof} 
\cite[Theorem 2.4, p. 84]{ml} shows that $U$ is projective in the category of prounipotent groups (and hence free) provided there exists a $\phi$ for all $\alpha$ and $f$ as in the corollary without the restrictions that $\alpha$ be non-split and $f$ be surjective. Apply the argument in the proof of Theorem \ref{T:UPFprojective} with $U$ replacing $U\Pi(F)$ to reduce to the cases where $\alpha$ is non-split and $f$ is surjective.
\end{proof}

This paper has been published in \emph{Proceedings of the American Mathematical Society}

https://doi.org/10.1090/proc/15740

\end{document}